\documentclass[a4paper]{amsart}
\usepackage{graphicx} 

\usepackage[utf8]{inputenc}
\usepackage[a-2u]{pdfx}
\usepackage{lmodern, booktabs}
\usepackage{amsmath,amsthm,amssymb}
\usepackage{pgfplots}
\usepgfplotslibrary{fillbetween} 
\usepackage{float}
\usepackage{tikz}

\usetikzlibrary{matrix}
\usetikzlibrary{patterns, shapes}
\usetikzlibrary{arrows}
\usetikzlibrary{calc,3d}
\usetikzlibrary{decorations,decorations.pathmorphing, decorations.pathreplacing}
\usetikzlibrary{through}
\tikzset{ext/.style={circle, draw,inner sep=1pt},int/.style={circle,draw,fill,inner sep=1pt},nil/.style={inner sep=1pt}}
\tikzset{exte/.style={circle, draw,inner sep=3pt},inte/.style={circle,draw,fill,inner sep=3pt}}
\tikzset{diagram/.style={matrix of math nodes, row sep=3em, column sep=2.5em, text height=1.5ex, text depth=0.25ex}}
\tikzset{diagram2/.style={matrix of math nodes, row sep=0.5em, column sep=0.5em, text height=1.5ex, text depth=0.25ex}}
\tikzset{every picture/.style={baseline=-.65ex}}
\tikzstyle{every loop}=[draw]
\tikzstyle{rloop}=[ out=10, in=-10, loop, distance=3em] 
\tikzstyle{aloop}=[ out=100, in=80, loop, distance=3em] 
\usepackage{tikz-cd}

\newcommand{\Out}{\mathrm{Out}}
\newcommand{\Aut}{\mathrm{Aut}}
\DeclareMathOperator{\gr}{gr}
\DeclareMathOperator{\rank}{rank}

\newcommand{\Z}{\mathbb{Z}}
\newcommand\FG{\mathsf{FG}}
\newcommand{\FGO}{\widetilde{\FG}}
\newtheorem{theorem}{Theorem}
\newtheorem{proposition}[theorem]{Proposition}
\newtheorem{lemma}[theorem]{Lemma}
\newtheorem{corollary}[theorem]{Corollary}

\newtheorem{conjecture}[theorem]{Conjecture}

\theoremstyle{definition}

\newtheorem{example}[theorem]{Example}

\newcommand{\sgn}{\mathrm{sgn}}
\theoremstyle{definition}

\theoremstyle{remark}

\DeclareMathOperator{\vdim}{dim}

\title{The homology of $\Out(F_8)$ and $\Aut(F_7)$}

\author{Maro\v s Grego}
\address{Department of Algebra, Faculty of Mathematics and Physics, Charles University, So\-ko\-lovsk\'a 83, 186~00 Prague 8, Czech Republic and Institute of Mathematics, Czech Academy of Sciences, \v Zitn\'a 25, 115 67 Prague 1, Czech Republic}
\email{grego@math.cas.cz}

\author{Tom\'a\v s V\'itek}
\address{Mathematical Institute, Faculty of Mathematics and Physics, Charles University, So\-ko\-lovsk\'a 83, 186~00 Prague 8, Czech Republic and Institute of Mathematics, Czech Academy of Sciences, \v Zitn\'a 25, 115 67 Prague 1, Czech Republic}
\email{tomas.vitek.university@gmail.com} 

\author{Thomas Willwacher}
\address{Department of Mathematics\\ ETH Zurich\\ R\"amistrasse 101 \\ 8092 Zurich, Switzerland}
\email{thomas.willwacher@math.ethz.ch}

\date{}

\begin{document}

\begin{abstract}
    We compute the rational homology of $\Out(F_8)$ and $\Aut(F_7)$, and several related groups. We also show that the Eisenstein classes in $H_7(\Aut(F_5))$ and $H_{11}(\Aut(F_7))$ are zero.
\end{abstract}

\maketitle

\section{Introduction}

Let $F_n$ be the free group with $n$ generators and let $\Aut(F_n)$ be the group of its automorphisms.
Let $\mathrm{Inn}(F_n)\subset \Aut(F_n)$ be the subgroup of \textit{inner} automorphisms, i.e., the automorphisms of the form
$x \mapsto g^{-1}xg$ for some $g \in F_n$.
The group of \textit{outer automorphisms} of $F_n$ is then the quotient group
$\Out(F_n) := \Aut(F_n) / \mathrm{Inn}(F_n)$.

The groups $\Out(F_n)$ and $\Aut(F_n)$ and their homology have received significant interest in geometric group theory.
In the following, homology is always considered with rational coefficients. 
Culler and Vogtmann have shown that the virtual cohomological dimension of $\Aut(F_n)$ is $2n-2$ and that of $\Out(F_n)$ is $2n-3$, see \cite[Corollary 6.1.3]{CullerVogtmann86}, \cite{VogtmannSurvey}. Furthermore, it is known that \cite{ConantHatcherKassabovVogtmann, HatcherVogtmann1, HatcherVogtmann2, Galatius}
\begin{align*}
    H_k(\Aut(F_n)) &=H_k(\Out(F_n))= 0 &\text{for $1\leq k\leq \frac 45 n -1$} 
\end{align*}
Possibly the most well-known family of homology classes are the Morita classes $\mu_{k}\in H_{4k}(\Out(F_{2k+2}))$. They have been shown to be nontrivial very recently by Kupers, Miller and Patzt \cite{KMP}. 
A large set of further possibly nontrivial cycles has been proposed in \cite{ConantHatcherKassabovVogtmann}.
Furthermore, extensive tables of Euler characteristics have been computed in \cite{BorinskyVermaseren}.

One may likewise study the homology with coefficients in the sign (or determinant) representation $H_k(\Aut(F_n),\mathrm{sgn})$ and $H_k(\Out(F_n),\mathrm{sgn})$.
These also contain a series of homology classes analogous to the Morita classes $\nu_{2k}^{\mathrm{BW}},\nu_{2k}^{\mathrm{KMP}}\in H_{2k-1}(\Out(F_{2k}),\sgn)$, introduced in \cite{BrunWillwacher} and \cite{KMP}, respectively. 
The classes $\nu_{2k}^{\mathrm{KMP}}$ are also shown to be non-trivial in \cite{KMP} for $k\geq 2$. The same argument used there also implies that the classes $\nu_{2k}^{\mathrm{BW}}$ are non-trivial, but the details are left to a forthcoming work. Conjecturally, $\nu_{2k}^{\mathrm{BW}}$ and $\nu_{2k}^{\mathrm{KMP}}$ are equal up to a multiplicative constant.

Numerically, the homology of $\Out(F_n)$ with trivial coefficients has been computed up to $n=6$ by Ohashi \cite{Ohashi}, and for $n=7$ by Bartholdi \cite{Bartholdi}.
For coefficients in the sign representation the homology of $\Out(F_n)$ was computed for $n\leq 7$ in \cite{BrunWillwacher}.
The same paper also contains the analogous computations for $\Aut(F_n)$ up to $n=6$.
Our main result is the computation of the homology of $\Out(F_8)$ and $\Aut(F_7)$, and the computation of the top homology of $\Out(F_9)$. The results can be found in Table \ref{tbl:results} and are displayed graphically in Figures \ref{fig:results} and \ref{fig:results2}. 

\begin{table}
\begin{center}
$\vdim\, H_k(\Out(F_n))$

\begin{tabular}{c|cccccccccccccccc}
n,k & 0 & 1 & 2 & 3 & 4 & 5 & 6 & 7 & 8 & 9 & 10 & 11 & 12 & 13 & 14 & 15\\
\hline
2 & 1 & 0 &  &  &  &  &  & &  &  &  &  &  \\
3 & 1 & 0 & 0 & 0 &  &  &  & &  &  &  &  &  \\
4 & 1 & 0 & 0 & 0 & 1 & 0 &  & &  &  &  &  &  \\
5 & 1 & 0 & 0 & 0 & 0 & 0 & 0 & 0 &  &  &  &  &  \\
6 & 1 & 0 & 0 & 0 & 0 & 0 & 0 & 0 & 1 & 0 &  &  &  \\
7 & 1 & 0 & 0 & 0 & 0 & 0 & 0 & 0 & 1 & 0 & 0 & 1 &  \\
8 & 1 & 0 & 0 & 0 & 0 & 0 & 0 & 0 & 0 & 0 & 0 & 0 & 3 & 3 \\
9 & 1 & 0 & 0 & 0 & 0 & 0 & 0 & ?&  ?& ? &  ?&  ?& ?& ?&  ?& 24 \\
\end{tabular}

\medskip 

$\vdim\, H_k(\Aut(F_n))$

\begin{tabular}{c|ccccccccccccc}
n,k & 0 & 1 & 2 & 3 & 4 & 5 & 6 & 7 & 8 & 9 & 10 & 11 & 12 \\
\hline
2 & 1 & 0 & 0 &  &  &  &  & &  &  &  &  &  \\
3 & 1 & 0 & 0 & 0 & 0 &  &  & &  &  &  &  &  \\
4 & 1 & 0 & 0 & 0 & 1 & 0 & 0 & &  &  &  &  &  \\
5 & 1 & 0 & 0 & 0 & 0 & 0 & 0 & 1 & 0 &  &  &  &  \\
6 & 1 & 0 & 0 & 0 & 0 & 0 & 0 & 0 & 1 & 0 & 1 &  &  \\
7 & 1 & 0 & 0 & 0 & 0 & 0 & 0 & 0 & 1 & 0 & 0 & 1 & 2  
\end{tabular}

\medskip 

$\vdim\, H_k(\Out(F_n),\sgn)$

\begin{tabular}{c|cccccccccccccccc}
n,k & 0 & 1 & 2 & 3 & 4 & 5 & 6 & 7 & 8 & 9 & 10 & 11 & 12 & 13 & 14 & 15\\
\hline
2 & 0 & 0 &  &  &  &  &  & &  &  &  &  &  \\
3 & 0 & 0 & 0 & 0 &  &  &  & &  &  &  &  &  \\
4 & 0 & 0 & 0 & 1 & 0 & 0 &  & &  &  &  &  &  \\
5 & 0 & 0 & 0 & 0 & 0 & 0 & 0 & 0 &  &  &  &  &  \\
6 & 0 & 0 & 0 & 0 & 0 & 1 & 0 & 0 & 0 & 0 &  &  &  \\
7 & 0 & 0 & 0 & 0 & 0 & 0 & 0 & 0 & 0 & 0 & 0 & 2 &  \\
8 & 0 & 0 & 0 & 0 & 0 & 0 & 0 & 1 & 0 & 0 & 0 & 0 & 0 & 7 \\
\end{tabular}

\medskip 

$\vdim\, H_k(\Aut(F_n),\sgn)$

\begin{tabular}{c|ccccccccccccc}
n,k & 0 & 1 & 2 & 3 & 4 & 5 & 6 & 7 & 8 & 9 & 10 & 11 & 12 \\
\hline
2 & 0 & 0 & 0 &  &  &  &  & &  &  &  &  &  \\
3 & 0 & 0 & 0 & 1 & 0 &  &  & &  &  &  &  &  \\
4 & 0 & 0 & 0 & 1 & 0 & 0 & 0 & &  &  &  &  &  \\
5 & 0 & 0 & 0 & 0 & 0 & 1 & 0 & 0 & 0 &  &  &  &  \\
6 & 0 & 0 & 0 & 0 & 0 & 1 & 0 & 0 & 0 & 0 & 2 &  &  \\
7 & 0 & 0 & 0 & 0 & 0 & 0 & 0 & 1 & 0 & 0 & 0 & 3 & 7  
\end{tabular}

\end{center}
    \caption{Dimension of the homology of $\Out(F_n)$ and $\Aut(F_n)$. Our new results are the rows $n=8,9$ for $\Out(F_n)$, and $n=7$ for $\Aut(F_n)$, with the other rows reproduced from \cite{Ohashi}, \cite{Bartholdi}, \cite{BrunWillwacher} for the reader's convenience.
    For $\Out(F_9)$ only the top degree homology is newly computed, the displayed low degree homology is in the stable range.
    \label{tbl:results} }
\end{table}

The tables are produced by numerically computing the homology of the forested graph complex $\FG$, or respectively the Lie graph complex, whose definition we recall in Section \ref{sec:FG} below. Implementation details and results, including further new results on the homology of the groups $\Gamma_{n,h}$ of \cite{ConantKassabovVogtmann, ConantHatcherKassabovVogtmann}, can be found in Section \ref{sec:methods results}.


We end the paper by describing in the appendix two interesting observations we noted in our experiments, which might help to simplify future exploration. 
First, the forested graph complex contains a smaller subcomplex spanned by 3-edge connected graphs. We show that the inclusion is a quasi-isomorphism (for the hairless part) in loop orders $\leq 8$, and for higher loop orders there is only a "small" discrepancy in the homology. This observation is essentially the reason for the feasibility of the computation of $H^{15}(\Out(F_9))$.
Second, we study on $\FG$ the filtration by girth, that is, by the length of the smallest cycle of the graph with the marked forest contracted. The resulting spectral sequence may be used to compute $H(\FG)$, and leads to fairly small computational problems. Experimentally, it turns out that almost all homology is concentrated in girth 1.

\begin{figure}
\begin{center}
\begin{tikzpicture}
\begin{axis}[
    xlabel={Free group rank},
    ylabel={Degree of homology},
    xmin=2, xmax=9,
    ymin=0, ymax=15,
	xtick distance=1,
	ytick distance=1,
    legend pos=north west,
    ymajorgrids=true,
    xmajorgrids=true,
    grid style=dashed,
	grid=both,
]

\addplot[
	only marks,
	black,
    color=black,
	fill=black,
	mark=triangle*,
    ]
    coordinates {
    (4,4)
    (6,8)
    (8,12)
    };

\addplot[
	only marks,
	black,
    color=black,
	fill=black,
	mark=*,
    ]
    coordinates {
	(7, 8)
	(7, 11)
    };

\addplot[
	only marks,
	black,
    color=black,
	fill=black,
	mark=*,
    ]
    coordinates {
	(8, 13)
	(7.8, 13)
	(8.2, 13)

	(7.8, 12)
	(8.2, 12)
    };

\addplot[
	only marks,
	black,
    color=black,
	fill=black,
	mark=*,
	mark size=8pt,
    ]
    coordinates {
	(9, 15)
    };
	
\addplot[
	only marks,
	black,
    color=white,
	fill=white,
	mark=text,
	text mark=24
    ]
    coordinates {
	(9, 15)
    };

\addplot[
	name path=upknown,
	domain=7.5:8.5,
	opacity=0,
]
	{2*x - 3};
\addplot[
	name path=upunknown,
	domain=0:12,
	opacity=0,
]
	{2*x - 3};
	
\addplot[
	name path=downknown,
	domain=7.5:8.5,
	opacity=0,
]
	{4*x/5 - 1};
\addplot[
	name path=downunknown,
	domain=0:12,
	opacity=0,
]
	{4*x/5 - 1};
\addplot[
	name path=upknown2,
	domain=8.5:9.5,
	opacity=0,
]
	{2*x - 3};
\addplot[
	name path=downknown2,
	domain=8.6:9.5,
	opacity=0,
]
	{14.2};

\addplot [green, opacity=0.5] fill between [of=upknown and downknown];
\addplot [green, opacity=0.5] fill between [of=upknown2 and downknown2];
\addplot [lightgray, opacity=0.5] fill between [of=upunknown and downunknown];
    
\end{axis}
\end{tikzpicture}
\end{center}

\caption{\label{fig:results} The known rational homology of $\Out(F_n)$. The shaded part is the potentially non-trivial range,
the green part contains the new results. The triangles are the Morita classes, the dots
are the other non-trivial classes. The top right dot contains the dimension
of the top class in $\Out(F_9)$.}
\end{figure}

\begin{figure}
\begin{center}
\begin{tikzpicture}

\begin{axis}[
    xlabel={Free group rank},
    ylabel={Degree of homology},
    xmin=2, xmax=8,
    ymin=0, ymax=13,
	xtick distance=1,
	ytick distance=1,
    legend pos=north west,
    ymajorgrids=true,
    xmajorgrids=true,
    grid style=dashed,
	grid=both,
]

\addplot[
	only marks,
	black,
    color=black,
	fill=black,
	mark=triangle,
    ]
    coordinates {
    (4,4)
    (6,8)
    (8,12)
    };

\addplot[
	only marks,
	black,
    color=black,
	fill=black,
	mark=o,
    ]
    coordinates {
	(7, 8)
	(7, 11)
    };

\addplot[
	only marks,
	black,
    color=black,
	fill=black,
	mark=*,
    ]
    coordinates {
	(6.9, 12)
	(7.1, 12)

    };
\addplot[
	only marks,
	black,
    color=black,
	fill=black,
	mark=*,
    ]
    coordinates {
	(5,7)

    };

\addplot[
	only marks,
	black,
    color=black,
	fill=black,
	mark=*,
    ]
    coordinates {
	(6,10)

    };

\addplot[
	name path=upknown,
	domain=6.5:7.5,
	opacity=0,
]
	{2*x - 2};
\addplot[
	name path=upunknown,
	domain=0:12,
	opacity=0,
]
	{2*x - 2};
	
\addplot[
	name path=downknown,
	domain=6.5:7.5,
	opacity=0,
]
	{4*x/5 - 1};
\addplot[
	name path=downunknown,
	domain=0:12,
	opacity=0,
]
	{4*x/5 - 1};

\addplot [green, opacity=0.5] fill between [of=upknown and downknown];
\addplot [lightgray, opacity=0.5] fill between [of=upunknown and downunknown];
    
\end{axis}
\end{tikzpicture}
\end{center}

\caption{
\label{fig:results2}
The known rational homology of $\Aut(F_n)$. The shaded part is the potentially non-trivial range,
the green part contains the new results. The marks are filled if they represent a class
in the kernel of the map to the homology of $\Out(F_n)$. 
}
\end{figure}

\subsection*{On the vanishing of the Eisenstein classes}
There is a notable discrepancy of our tables with a previous statement in the literature. Namely, Conant-Kassabov-Vogtmann \cite{ConantKassabovVogtmann} defined Eisenstein classes $e_{4k+3}\in H_{4k+3}(\Aut(F_{2k+3}))$ by assembling classes in $H_{2k+3}(\Gamma_{2,2k+2})$ and $H_{2k}(\Gamma_{1,2k+1})$. They claimed that $e_{7}$ and $e_{11}$ are nonzero.
We claim in contrast that both are zero by the following arguments.
For $e_7\in H_7(\Aut(F_5))$ we can even show the stronger statement that $H_7(\Aut(F_5))$ does not contain any non-trivial assembled classes.
Namely, the assembled classes are those in the joint images of the assembly maps 
\begin{align*}
H_{7}(\Gamma_{4,3}) &\to H_7(\Aut(F_5)) & \\
H_{p}(\Aut(F_{g})) \otimes H_{7-p}(\Gamma_{5-g,2}) &\to H_7(\Aut(F_5)) &\text{for $0\leq p\leq 7$ and $1\leq g\leq 4$.}
\end{align*}
Consulting the tables of \cite{BrunWillwacher}, the domains of all these morphisms are zero. In particular, $e_7=0$. Furthermore, the argument also disproves \cite[Conjecture 10.1]{ConantHatcherKassabovVogtmann} of Conant--Hatcher--Kassabov--Vogtmann that the homology of the groups $\Gamma_{n,h}$ below the vcd is generated by assembled classes.

For the Eisenstein class $e_{11}\in H_{11}(\Aut(F_7))$ we first recall that $e_{11}$ must be in the kernel of the map $H_{11}(\Aut(F_7))\to H_{11}(\Out(F_7))$, arising from the projection $\Aut(F_7)\to \Out(F_7)$.\footnote{To see this, note that $e_{11}$ is assembled from two classes in the vcd. But the maps $\Gamma_{n,h}\to \Gamma_{n,h-1}$ send all vcd classes to zero by degree reasons.} Since the latter projection induces a split morphism in rational homology \cite[Theorem 2.4]{ConantHatcherKassabovVogtmann}, the map $H_{11}(\Aut(F_7))\to H_{11}(\Out(F_7))$ must be surjective. By Table \ref{tbl:results} the dimensions of domain and target are both 1, so the map must be injective as well. Hence $e_{11}=0$.

\subsection*{Acknowledgements}
We thank Karen Vogtmann and Martin Kassabov for helpful discussions.
The first two authors have been supported by the MSCA CaLIGOLA (grant number 101086123), funded by the European Commission, Charles University Research Center program No. UNCE/24/SCI/022, the SVV-2025-260837 project, and the GA UK project No. 253271. The third author has been supported by the NCCR Swissmap, funded by the Swiss National Science Foundation.

\subsection*{Declaration of AI use}
We have used AI tools (ChatGPT Pro) for proofreading and for general suggestions on the manuscript in the final stages of this project.
We also use some elements of code that were written by the AI.

\section{The forested graph complex}
\label{sec:FG}
We recall here briefly the definition of the hairy forested graph complex, see also \cite{BrunWillwacher} for a slightly more detailed exposition and proofs of some results we use. Note that we shall always work with the bridgeless variant.
In this article a \textit{hairy graph} is a connected 1-dimensional CW complex with $h$ univalent vertices labeled
by the elements of $\{1, \dots, h \}$, called \textit{hairs}, and the other vertices at least
3-valent.
For a hairy graph $G$, let $V(G)$ be its set of vertices and $E(G)$ its set of edges.
The \textit{cyclomatic rank} (or just \textit{rank}, or \emph{loop order}) of $G$ is the number
\[ |E(G)| - |V(G)| + 1. \]
Equivalently, this is the dimension of the first homology group of $G$ or of its cycle space.
We say that a hairy graph is \emph{2-edge connected} or \emph{bridgeless} if it remains connected after removing any edge between two non-hair vertices.
A tree is a connected graph with rank $0$, a forest is a disjoint union of trees.

We define the (bridgeless) \textit{forested graph complex} $\FG$ to be generated by the triples
$(G, F, \sigma)$, where $G$ is a bridgeless hairy graph, $F$ is a subforest of $G$ spanning all vertices
except the hairs (where a component of $F$ may consist of a single vertex) and $\sigma$ is the
\textit{orientation} - an ordering of edges of $F$, modulo the following equivalence relations:
\begin{itemize}
	\item For each permutation $\pi$ of $E(F)$, 
	\[ (G, F, \sigma) = \mathrm{sgn}(\pi) (G, F, \pi \circ \sigma) \]
	where $\pi \circ \sigma$ is the order $\sigma$ permuted by $\pi$.
	\item For each graph isomorphism $\phi: G \to H$ preserving the hair labels,
	\[ (G, F, \sigma) = (H, \phi(F), \phi(\sigma)) \]
	where $\phi(\sigma)$ is the order on $E(\phi(F))$ given by
	$e < e' \Leftrightarrow \phi^{-1}(e) < \phi^{-1}(e')$.
\end{itemize}
In particular, if $G$ has an automorphism permuting the order of edges of $F$ with the minus
sign, then $(G, F, \sigma) = 0$.

$\FG$ is graded by the number of edges in the chosen subforest. It carries
two differentials (for the order $\sigma$ being $e_1 < \dots < e_n$):
\begin{itemize}
	\item the \textit{unmarking differential}
	\begin{equation}\label{equ:du def}
    d_u((G, F, \sigma)) = \sum_{i = 1}^n (-1)^i(G, F \backslash e_i, \sigma \backslash e_i ) 
    \end{equation}
	where $F \backslash e_i$, resp. $\sigma \backslash e_i$ is $F$, resp. $\sigma$ with the
	edge $e_i$ removed.
	\item the \textit{contracting differential}
	\begin{equation}\label{equ:dc def}
    d_c((G, F, \sigma)) = \sum_{i = 1}^n (-1)^i(G_{e_i}, F_{e_i}, \sigma \backslash e_i ) 
    \end{equation}
	where $G_{e_i}$, resp. $F_{e_i}$ is $G$, resp. $F$ with the
	edge $e_i$ contracted. Since contracting edges can never decrease the edge-connectivity $G_{e_i}$ is again bridgeless.
\end{itemize}

The differentials satisfy $d_u^2 = d_ud_c + d_cd_u = d_c^2$ = 0. We will consider
$\FG$ with the primary differential $d_u + d_c$.
The complex $\FG$ has a natural splitting into subcomplexes
$\FG(n,h)$ generated by rank $n$ graphs with $h$ hairs. It is a pivotal result
of Conant, Kassabov and Vogtmann that they compute the rational group homology of the groups
\[
\Gamma_{n, h} = 
\begin{cases}
    \Out(F_n) & \text{if $h=0$} \\
    \Aut(F_n) \ltimes F_n^{h-1} & \text{if $h>0$}
\end{cases}.
\]

\begin{theorem}[{\cite[Theorem 11.1]{ConantKassabovVogtmann}, \cite{ConantVogtmann}}]
For $2n+h\geq 3$ we have
\[ H_*(\Gamma_{n, h}, \mathbb{Q}) \simeq H_*(\FG(n, h), \mathbb{Q}). \]
\end{theorem}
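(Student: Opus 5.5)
The plan is to realize $\FG(n,h)$ as a rational cellular chain complex computing the equivariant homology of a contractible space with a proper action of $\Gamma_{n,h}$. First I would construct the hairy analogue of Culler--Vogtmann Outer space. Let $X_{n,h}$ be the space of marked metric graphs of rank $n$ with $h$ labelled hairs, that is, graphs $G$ together with a homotopy equivalence $R_n\to G$. For $h\geq 1$ the hairs give basepoints, and the group that acts is $\Gamma_{n,h}$. By the results of Culler--Vogtmann and Hatcher--Vogtmann, this space is contractible and the action of $\Gamma_{n,h}$ is proper with finite stabilizers. It then follows that $H_*(\Gamma_{n,h},\mathbb Q)\cong H_*^{\Gamma_{n,h}}(X_{n,h};\mathbb Q)$, and since stabilizers are finite this equals the rational homology of the quotient $X_{n,h}/\Gamma_{n,h}$. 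I would use the spine $K_{n,h}$ instead of $X_{n,h}$: it is a deformation retract and a simplicial complex with a cocompact action. Its simplices are chains of forest collapses $(G,F_0\subset F_1\subset\dots)$. The condition $2n+h\geq 3$ ensures the graphs are nondegenerate, so the spine is nonempty and contractible.

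Second, I would regroup the simplices of $K_{n,h}/\Gamma_{n,h}$ into larger cells indexed by pairs $(G,F)$, with $G$ a marked graph up to the action and $F$ a subforest. The cell attached to $(G,F)$ is the union of the simplices of chains lying between $G$ and $G/F$; it is the cube $[0,1]^{E(F)}$, with one coordinate per forest edge. This gives a cubical decomposition of the spine. The faces of the cube corresponding to setting a coordinate to $0$ or to $1$ are, respectively, $(G,F\setminus e)$ (unmark $e$) and $(G/e,F/e)$ (contract $e$). The cellular boundary of $(G,F,\sigma)$ is therefore exactly $d_u+d_c$. The sign comes from the ordering $\sigma$ of $E(F)$, which orients the cube. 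Passing to the quotient by $\Gamma_{n,h}$ rationally kills the cells whose stabilizer reverses orientation; these are exactly the graphs with odd automorphisms, which the relations in the definition of $\FG$ set to zero. Hence $\FG(n,h)$ without the bridgeless condition computes $H_*(\Gamma_{n,h};\mathbb Q)$.

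Third, I would restrict to bridgeless graphs. I would filter by the set of separating edges, or equivalently retract onto the subspace of graphs without separating edges, as in the reduced Outer space. The complementary part of the complex should be acyclic. For a separating edge $e$ not adjacent to a hair, the graph splits into two pieces, each carrying positive rank or hairs. The pairs $(G,F)$ with $e\notin F$ and with $e\in F$ then cancel through the map $e\mapsto$ mark $e$. This is a contracting homotopy $h_e$ on the associated graded: contraction of $e$ cannot be undone in the bridgeless subcomplex, so on the graded pieces only $d_u$ acts on $e$, and $[d_u,h_e]=\mathrm{id}$. The identity holds because a separating edge can always be added to a forest without creating a cycle.

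I expect this last step to be the main obstacle. One has to organize the filtration so that the homotopies for different bridges, and the interplay with contracting other edges, are compatible, and so that automorphisms of $G$ do not spoil the cancellation. This is especially delicate when an automorphism permutes bridges. To handle it, I would filter by the number of non-hair bridges of $G/F$ and use a spectral sequence argument, checking that $E^1$ vanishes off the bridgeless part.
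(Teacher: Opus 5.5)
The paper itself gives no proof of this statement. It quotes \cite{ConantKassabovVogtmann, ConantVogtmann} and refers to \cite{BrunWillwacher} for the passage to the bridgeless variant. Your first two steps reproduce the standard argument behind those citations, and that part is fine: the contractible hairy spine has finite stabilizers, so rational equivariant homology applies, and its cube structure has chains modulo $\Gamma_{n,h}$ equal to the forested complex with differential $d_u\pm d_c$.

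\textbf{The gap.} The problem is the bridgeless reduction, specifically the filtration you commit to at the end. Contracting forest edges neither creates nor destroys bridges among the remaining edges. So the number of non-hair bridges of $G/F$ is exactly the number of \emph{unmarked} bridges of $G$. This number is unchanged by every contraction ($d_c$ does not even change $G/F$), and it increases by one when a marked bridge is unmarked. Hence in the associated graded it is the unmarking of bridges that is discarded, while contraction of bridges survives. That is the opposite of what your homotopy $h_e$ needs, and $E^1$ does not vanish off the bridgeless part.

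A concrete failure: take $n=2$, $h=0$, and let $G$ be the barbell (two tadpoles joined by a bridge $e$). Work in the full complex with bridges allowed, modulo the bridgeless subcomplex.
\begin{itemize}
\item $(G,\{e\})$ is nonzero there, since every automorphism fixes $e$.
\item Contracting $e$ gives the bridgeless rose, which is zero in the quotient.
\item Unmarking $e$ raises your filtration, so that term is dropped.
\item In loop order $2$ there is nothing in degree $2$, so $(G,\{e\})$ gives a nonzero class in $E^1$.
\end{itemize}
Also, ``contraction of $e$ cannot be undone'' is not the operative reason. What matters is that contracting a bridge lowers the number of bridges of $G$.

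\textbf{The repair.} Filter instead by the number $b(G)$ of non-hair bridges of $G$ itself. Unmarking and contracting non-bridges preserve $b(G)$, and contracting a bridge lowers it by one. So the associated graded differential is $d_u+d_c^{\mathrm{nb}}$, where $d_c^{\mathrm{nb}}$ contracts only non-bridge edges.

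On this associated graded, set $h(G,F,\sigma)=-\sum_e (G,F\cup e,\, e<\sigma)$, summing over all unmarked bridges $e$. Adding bridges to a forest never creates a cycle, so this is well defined. With the paper's sign convention $(-1)^i$ one checks $dh+hd=b(G)\cdot\mathrm{id}$.

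Because $h$ sums over \emph{all} bridges, it is natural under graph isomorphisms and descends to the complex with symmetry relations. This settles your worry about automorphisms permuting bridges. Over $\mathbb Q$, with $b(G)\ge 1$ off the bridgeless part, every graded piece of the quotient is contractible. The filtration is finite for fixed $(n,h)$, so the quotient is acyclic.

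Alternatively, you can make your ``retract onto reduced Outer space'' remark precise on the spine. Let $B(G)$ be the set of non-hair bridges and set $r(G)=G/B(G)$.
\begin{itemize}
\item $r$ is an equivariant poset map, because $B(G/F)=B(G)\setminus F$ and $F\cup B(G)$ is a forest.
\item $r(G)\le G$, and $r$ is the identity on bridgeless graphs.
\end{itemize}
By Quillen's homotopy lemma the bridgeless spine is then an equivariant deformation retract, and its cube structure gives the bridgeless complex directly.
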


\subsection{The excess filtration}
For a hairy graph $G$ with $h$ hairs, define its \textit{excess} by
\[ e(G) = 2|E(G)| - 3|V(G)| + 2h= \sum_{\substack{v \in V(G)\\ \text{$v$ not hair}}} (\mathrm{deg}(v) - 3). \]
The excess measures how many edge contractions of a trivalent graph are necessary to obtain $G$;
the trivalent graphs are exactly the ones with excess $0$.

Let $\FG^{(e)}$ be the graded subspace spanned by graphs with excess $e$.
It is immediate that
\begin{align*}
d_u(\FG^{(e)}) &\subseteq \FG^{(e)} \\
d_c(\FG^{(e)}) &\subseteq \FG^{(e + 1)} \\
\end{align*}

The following proposition follows from the Koszulness of the cyclic commutative operad. It was first noted by Kontsevich \cite{Kontsevich} for the full forested complex, see also the observations of \cite{BrunWillwacher} to cover the 2-edge-connected subcomplex as well. The result was already used in \cite{Ohashi}, \cite{Bartholdi}.
\begin{proposition}\label{exc0}
The homology of $\FG(n,h)$ with respect to the $d_c$ differential is concentrated in
excess $0$ as long as $n\geq 1$.
\end{proposition}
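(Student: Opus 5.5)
The plan is to decompose $(\FG(n,h),d_c)$ vertex by vertex. For this we fix, for each generator $(G,F,\sigma)$, the graph $\bar G := G/F$ obtained by contracting the whole forest. The contracting differential $d_c$ contracts a marked edge. This leaves $G/F$ unchanged, since the contracted edge was in $F$ and $F_{e}$ is the image of $F$. Hence $(\FG(n,h),d_c)$ splits as a direct sum over isomorphism classes of "core" graphs $\bar G$, where $\bar G$ is a bridgeless hairy graph with all marked data contracted. Each summand is the coinvariants under $\Aut(\bar G)$, which is exact over $\mathbb{Q}$, of a tensor product over the vertices $v$ of $\bar G$ of local complexes $C(v)$. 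The complex $C(v)$ is spanned by trees $T$ with leaves the half-edges at $v$, all internal vertices at least trivalent, and all internal edges marked, with orientation given by the ordering of the edges. Its differential contracts an internal edge, with the sign rule. Bridgelessness is automatic here: any edge of $G$ that is a bridge would remain a bridge or become a loop condition in $\bar G$, so the bridgeless condition is a condition on $\bar G$ alone.

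The key step is to identify $C(v)$, for $v$ of valence $k$, with the Koszul-type complex computing the cyclic bar construction (up to suspension and sign twist) of the cyclic commutative operad in arity $k-1$. Equivalently, $C(v)$ is the cellular chain complex of a space of trees. The Koszulness of $\mathrm{Com}$, whose Koszul dual is $\mathrm{Lie}$, says that this complex has homology concentrated in the top degree, namely in the trivalent trees. That top homology is $\mathrm{Lie}((k-1))\otimes\sgn$-type, with Whitehouse-module symmetries. Concretely, the homology of the complex of trees with $d_c$ is concentrated in the degree where the number of internal edges is maximal, i.e. $k-3$. This is the statement of Robinson--Whitehouse and Kontsevich that the space of fully grown trees is a wedge of spheres of top dimension. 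I would cite this as the input from Koszulness rather than reprove it.

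Then, by Künneth and exactness of taking $\Aut(\bar G)$-coinvariants, the $d_c$-homology of each summand is concentrated where every local tree is trivalent. Such a forested graph $(G,F)$ has all vertices of $G$ trivalent. The vertices of $G$ are exactly the internal vertices of the local trees, so $G$ has excess $0$. This proves the claim.

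I expect the main obstacle to be the bookkeeping of degenerate cases. First, the local tree complex for a vertex of valence $3$ or less is trivial and fine. Second, one must check that a vertex of $\bar G$ with a self-loop or multiple edges does not interfere, since the local complex only sees half-edges. Third, one must handle the hairs: hairs are univalent, not contracted, and not part of $F$, so they simply act as labelled half-edges. Finally, the hypothesis $n\geq 1$ must be used. For $n=0$ the graph $G$ is itself a tree and $\bar G$ may collapse to a single vertex with only hairs, or even to a degenerate configuration where the forest is not spanning. In that case the decomposition above fails at the boundary, so I would verify that for $n\geq1$ every $\bar G$ has at least one genuine vertex structure compatible with the splitting. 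I would also check that orientation signs match the sign twist in the Koszul complex, so that the identification is an isomorphism of complexes rather than merely of graded spaces.
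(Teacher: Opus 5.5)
There is a genuine gap.

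Your strategy is the standard Kontsevich argument that the paper cites: split $(\FG(n,h),d_c)$ over $\bar G=G/F$, factor each summand into local tree complexes at the vertices, and invoke Koszulness of cyclic $\mathrm{Com}$. It is correct for the \emph{full} forested complex, where bridges are allowed. The gap is your claim that bridgelessness is a condition on $\bar G$ alone. A marked edge of $G$ that is a bridge is contracted in $\bar G$ and leaves no trace there: it neither remains a bridge nor produces a loop.

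So for bridgeless $\bar G$, the local factor at a vertex $v$ of valence $k$ is not the full tree complex. Let $\pi_v$ be the partition of the half-edges at $v$ in which two half-edges share a block if they are joined through $\bar G\setminus v$ or form a self-loop; each hair is a singleton block. An internal edge of the local tree at $v$ that splits the half-edges as $A\sqcup B$ is a bridge of $G$ exactly when $A$ and $B$ are both unions of blocks of $\pi_v$. The correct local factor is therefore the subcomplex $C_{\pi_v}(v)$ of trees with no such internal edge. It depends on the global shape of $\bar G$, and Koszulness of $\mathrm{Com}$ says nothing about it directly.

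It genuinely differs from the full complex. Suppose $\pi_v$ has a single non-singleton block $\{a,b\}$. Then the admissible trees are those whose internal edges all lie on the path from $a$ to $b$, i.e.\ ordered set partitions of the other $k-2$ half-edges. So $C_{\pi_v}(v)$ is the shifted augmented chain complex of the barycentric subdivision of $\partial\Delta^{k-3}$. Its homology is one-dimensional, not the $(k-2)!$-dimensional Whitehouse module.

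The hypothesis $n\geq 1$ shows the argument cannot be complete, since your proof never really uses it and would prove a false statement for $n=0$. For $n=0$ and $h\geq 4$, the only bridgeless graph is the corolla with $h$ hairs. Hence $H(\FG(0,h),d_c)$ is one-dimensional in excess $h-3\neq 0$. Your full local tree complex would instead place the homology on trivalent trees, all of whose internal edges are bridges.

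The real role of $n\geq 1$ is that it makes every $\pi_v$ non-discrete. Some half-edge at $v$ is not a hair, since otherwise $\bar G$ would be $v$ with its hairs. That half-edge either lies on a self-loop, or its edge is an unmarked non-hair edge of the bridgeless $G$ and hence not a bridge of $\bar G$. Either way it shares a block with another half-edge. For $n=0$ the partition is discrete and every internal edge is forbidden.

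The missing ingredient is a lemma: for every non-discrete partition $\pi$ of a set of size $k\geq 3$, the complex of trees all of whose internal edges separate some block of $\pi$ has homology concentrated in the top degree $k-3$. This is exactly the extra observation the paper attributes to Brun--Willwacher for the 2-edge-connected subcomplex. Granting it, the rest of your argument goes through: K\"unneth, exactness of $\Aut(\bar G)$-coinvariants over $\mathbb{Q}$, and trivalent local trees giving excess $0$.
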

\begin{corollary}\label{restrictexcess0}
The following inclusion of complexes is a quasi-isomorphism as long as $n\geq 1$:
\[  (\ker\ d_c \restriction_{\FG^{(0)}}, d_u )
\subset (\FG, d_u + d_c) \]
\end{corollary}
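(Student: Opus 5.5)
We would prove Corollary~\ref{restrictexcess0} by a spectral sequence argument for the excess filtration, with Proposition~\ref{exc0} as the only real input. Fix $n\geq 1$ and $h$, and work in $\FG(n,h)$. This complex is finite dimensional, since graphs of fixed rank and hair number with all internal vertices at least trivalent form a finite set. The excess $e$ is bounded, and $\FG(n,h)=\bigoplus_e \FG^{(e)}$. The differential $d_u$ preserves excess and $d_c$ raises it by one. So the subspaces $F^p=\bigoplus_{e\geq p}\FG^{(e)}$ form a finite descending filtration by subcomplexes for $d_u+d_c$.

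The first step is to choose the grading so that $d_c$ is the first-page differential. We regrade the double complex so that excess and a complementary degree play the roles of the two directions. Let $k$ be the forest degree. Then $d_u$ has bidegree $(e,k)\mapsto(e,k-1)$ and $d_c$ has bidegree $(e,k)\mapsto(e+1,k-1)$. Setting $q=k+e$, we get that $d_c$ preserves $q$ and $d_u$ lowers $q$ by one. Since $k=q-e$, $d_c$ is then a differential along columns of fixed $q$ that changes $e$, and $d_u$ moves between columns. We filter by $q$, via $G^s=\bigoplus_{q\leq s}$, which is a filtration by subcomplexes because both differentials do not raise $q$. On the associated graded only $d_c$ survives. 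By Proposition~\ref{exc0}, the $E^1$ page is $H(\FG(n,h),d_c)$, and it is concentrated in excess $0$.

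Next we identify $E^1$. In excess $0$ nothing maps into $\FG^{(0)}$ via $d_c$, because $d_c$ raises excess. Hence $H(\FG,d_c)$ in excess $0$ is exactly $\ker d_c|_{\FG^{(0)}}$. The $d^1$ differential is induced by $d_u$, which preserves excess and so maps $\ker d_c|_{\FG^{(0)}}$ to itself. Since everything now sits in the single row $e=0$, all higher differentials vanish. The spectral sequence therefore collapses at $E^2=H(\ker d_c|_{\FG^{(0)}},d_u)$, and convergence is automatic because the filtration is finite.

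Finally we check that the inclusion induces this isomorphism. The inclusion $\iota\colon(\ker d_c|_{\FG^{(0)}},d_u)\to(\FG,d_u+d_c)$ is a chain map, because on this subcomplex $d_c=0$. Equip the source with the same $q$-filtration, which is just the forest degree. Then $\iota$ induces an isomorphism on $E^1$ pages: on the source the $E^1$ page is the complex itself, and on the target it is $\ker d_c|_{\FG^{(0)}}$ by the previous step. By the comparison theorem for finite filtrations, $\iota$ is a quasi-isomorphism.

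The main obstacle is only bookkeeping. The grading has to be chosen so that $d_c$, rather than $d_u$, becomes the $E^0$ differential. We also need the filtration to be finite, or at least bounded in each total degree, so that convergence holds; finiteness of $\FG(n,h)$ gives this directly. The substantive content is entirely in Proposition~\ref{exc0}, and the hypothesis $n\geq 1$ enters only through that proposition.
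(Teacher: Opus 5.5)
Your argument is correct. It is the standard spectral sequence argument that the paper leaves implicit when it derives the corollary directly from Proposition~\ref{exc0}: filter by $k+e$, which is equivalent to filtering by the number of unmarked edges, so that $d_c$ becomes the $E^0$ differential and $E^1$ sits on a single line. One point to tighten: $d_u$ maps $\ker d_c|_{\FG^{(0)}}$ into itself because $d_ud_c+d_cd_u=0$, not merely because $d_u$ preserves excess.
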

The kernel $\ker\ d_c\restriction_{\FG^{(0)}}\subset \FG^{(0)}$ with differential $d_u$ is the Lie graph complex. 



\subsection{Odd variant and homology with coefficients in the sign representation}
There is also an "odd" variant $\FGO$ of the forested graph complex that differs from $\FG$ only by sign conventions.
Concretely, elements of $\FGO$ are linear combinations of quadruples $(G,F,o,\lambda)$ with $G$ a hairy bridgeless graph as before, $F\subset G$ a sub-forest and $o\in \Lambda^{top}\Z E(F)$ and $\lambda\in \Lambda^{top} H_1(G,\Z)$ each a choice of one of the two generators.
We impose the relations, for $\phi:G\to H$ a graph isomorphism:
\begin{align*}
    (G,F,o,\lambda)&=- (G,F,-o,\lambda)=- (G,F,o,-\lambda)
\\
    (G,F,o,\lambda) &= (H,\phi(F), \phi(o),\phi(\lambda)).
\end{align*}
The differential (see \eqref{equ:du def} and \eqref{equ:dc def}) extends naturally and just leaves the extra datum $\lambda$ unchanged.
(Respectively, one applies the natural map $\Lambda^{top} H_1(G,\Z)\to \Lambda^{top} H_1(G/e,\Z)$.) The homological degree of the element $(G,F,o,\lambda)$ is still the number of edges in $F$.

Alternatively, the combined orientation datum $(o,\lambda)$ can also be encoded by fixing an ordering of the unmarked edges and vertices, and a direction of the edges, see \cite{BrunWillwacher} for details on this convention. By \cite{BrunWillwacher} we then have that 
\begin{align*}
    H_k(\FGO(n, h)) &= H_k(\Gamma_{n,h},\sgn), 
\end{align*}
where $\sgn$ is the representation of $\Gamma_{n,h}$ given by the composition 
\[
\Gamma_{n,h} \to \mathrm{GL}(n,\mathbb Z) \xrightarrow{\det} \{\pm 1\},
\]
and again the homology of $\FGO(n, h)$ with respect to the differential $d_c$ is concentrated in excess zero. 

\subsection{Three-edge connected subcomplex}
We say that a bridgeless hairy graph is \emph{3-edge-connected} if it remains connected after removing any set of two edges that are not adjacent to hairs.
The 3-edge-connected graphs span a subcomplex 
\[
(\FG^{3e}, d_c + d_u) \subset (\FG, d_c+d_u).
\]
Generally, the inclusion is not a quasi-isomorphism.
However, in the case without hairs the inclusion is close to a quasi-isomorphism, in the sense that the homology of the quotient
\[
\FG(n,0) / \FG^{3e}(n,0)
\]
is relatively small. We shall use this in our computation of the top degree homology of $\Out(F_9)$ below. 
For $n\leq 8$ one even has that the inclusion $\FG^{3e}(n,0) \to \FG(n,0)$ is a quasi-isomorphism, see Proposition \ref{prop:3ec} in the Appendix. 
We do not use the latter result for our computations, though.

\section{Methods and Results}
\label{sec:methods results}
Let $\FG(n,h)_p$ be the graded part of degree $p$. It is spanned by graphs of loop order $n$
with $h$ hairs and a chosen subforest with $p$ edges.
Fix $n$ and $h$ and let $\delta^p_u := d_u \restriction \FG^{(0)}(n,h)_p$ be the restriction of
$d_u$ to the excess $0$ part of $\FG(n, h)_p$, and analogously define $\delta^p_c$.
Let $\delta^p_{uc}$ be the sum $\delta^p_u + \delta^p_c$.
According to the Corollary \ref{restrictexcess0}, the $p$-th homology
$H_p(\FG(n, h))$ is isomorphic to the vector space
\[ \frac{\ker\ \delta^p_{uc}}
		{\delta^{p+1}_u (\ker\ \delta^{p+1}_c )} \]

By the rank-nullity theorem,
\[ \vdim \left(\ker\delta^p_{uc}\right) =
	\vdim \FG^{(0)}(n,h)_p  - \rank\ \delta^p_{uc} \]

Similarly,
\begin{align*}
\vdim\left( \delta^{p+1}_u (\ker \delta^{p+1}_c ) \right) &=
\vdim (\ker\delta^{p+1}_c)
- \vdim\left( 
 \ker\delta^{p+1}_c \cap
 \ker\delta^{p+1}_u\right) \\
&=
\vdim(\ker\delta^{p+1}_c)
- \vdim(
 \ker\delta^{p+1}_{uc}) \\
&= 
\vdim(\ker\delta^{p+1}_c)
- \vdim \FG^{(0)}(n,h)_{p+1}  + \rank\delta^{p+1}_{uc} \\
&= 
\rank\delta^{p+1}_{uc}
- \rank\delta^{p+1}_c
\end{align*}

In total,
\begin{align}
	\label{rankcomp}
	\vdim H_p(\FG(n,h)) = 
	\vdim \FG^{(0)}(n,h)_p  - \rank \delta^p_{uc} -	
\rank \delta^{p+1}_{uc}
+ \rank \delta^{p+1}_c
\end{align}

Moreover, thanks to the Proposition \ref{exc0}, the dimension of $\ker\ \delta^p_c$
(or from it the rank of $\delta^p_c$) can be computed using the Euler characteristic
as an alternating sum over different forest sizes and excesses
\[
	\vdim (\ker\delta^p_c) = \sum_{d + e = p} (-1)^e\vdim\FG^{(e)}(n,h)_d
\]

Thus the dimensions of the homology of $\FG(n,h)$ can be computed
using the knowledge of dimensions of the components for every excess and the ranks of the
matrices for the differential $\delta_{uc}$ in excess $0$, which is what we computed for loop orders $\leq 8$.

\subsection{The computer implementation}
We used the \texttt{nauty} library \cite{nauty} to generate all the graphs of the given
loop order of the specified excess. Since our graphs are allowed to have multiple edges
and tadpoles, we represented them as bipartite simple graphs, where the vertices of one
colour represent the edges: the degree 2 vertices are edges between distinct vertices
and the degree 1 vertices are tadpoles. The vertices of the other color must have degree
at least 3.\footnote{This degree condition apparently miscounts the degree contributions of the tadpoles. However, if the valence of a vertex is two, with one or both of the edges encoding a tadpole, then either the graph would consist of that single vertex and the loop order is $\leq 2$, or it has a bridge. Either way the graph can be ignored for the purposes of our computation.}

We then wrote Rust code that finds all subforests of the provided graphs,
up to isomorphism, computes the matrices for the $d_u + d_c$ differential and saves them.

On the generated matrices, we ran a simple partial sparse Gaussian elimination, using only operations
that strictly decrease the number of nonzeros, in the following steps:
\begin{enumerate}
	\item For each row with a single nonzero entry, we delete this row, along with the
	corresponding column, since this column cannot contribute to the kernel; we remember
	to add $1$ to the final rank. 
	
	\item For each row with two nonzero entries in columns $i$ and $j$, we replace the column
	$i$ with a linear combination of columns $i$ and $j$ making the entry in this row $0$. 
    We then delete this row and the column $j$. We remember to add $1$ to the rank.
	 

	\item For each pair of linearly dependent rows, we keep only the first one.
\end{enumerate}
Of course, rows and columns that are identically zero may also be deleted.
The above steps are iterated until no further such simplifications are possible. 
Each of the steps, while relatively cheap, provides a significant reduction of the matrix size.

\begin{example}
The top matrix (degree 13) of the 3-edge connected complex in the loop order 8 originally
has \texttt{54 923 731} rows, \texttt{40 184 866} after the step 1,
\texttt{26 788 165} after the step 2 and \texttt{9 525 418} after the step 3.
The number of columns is reduced from \texttt{6 737 887} to \texttt{4 607 207}
and then to \texttt{1 060 387}.
\end{example}

We then estimated the ranks of the matrices using the block Coppersmith-Wiedemann algorithm \cite{Coppersmith, Skipness}
over a small prime,
partially implemented on the GPU.
This algorithm only provides a lower bound of the rational rank, although with a high probability
of the bound being optimal.

Since in the formula (\ref{rankcomp}), the ranks of the $\delta_{uc}$ matrices appear with
a negative sign, if the computed homology happens to be $0$, it certifies that the ranks
cannot be higher and the lower bounds are the actual ranks. Therefore, the only situation
where we need to find further witnesses for the rank size is when the computation yields
nonzero homology in two consecutive degrees.\footnote{This argument has already been used in \cite{Bartholdi}.} For $\Out(F_8)$ and $\Aut(F_7)$, this only happens
in the two topmost degrees, meaning we need to verify only the rank of the topmost differential.

We do this by finding an explicit basis for its kernel. The block Wiedemann algorithm
provides vectors in the kernel modulo a small prime $p$. By explicitly checking linear
combinations of these vectors over the integers, we have managed to find the elements of
the rational kernel, thus bounding the rank of the matrix from above and verifying that
the computed rank is correct.

The computed dimensions, ranks and homology groups of $\Out(F_8)$ are summarized in Table
\ref{fig:out8figure}. The rank in the top degree was certified by finding 3 elements in
the kernel, which generate the top homology:
\begin{enumerate}
\item \texttt{3 681 059} nonzeros, coefficient range -34 to 34
\item \texttt{2 636 652} nonzeros, coefficient range -48 to 48
\item \texttt{3 805 340} nonzeros, coefficient range -60 to 60
\end{enumerate}
The support of these vectors is minimal among all vectors in the top kernel,
as was verified by trying all linear combinations modulo a small prime, which provides
a lower bound on the possible support over integers.

The computed dimension, ranks and homologies of $\Aut(F_7)$ and $\Out(F_8)$ can be found in the tables of Appendix \ref{app:tables}.
The appendix also contains the analogous data for homology of the groups $\Gamma_{n,h}$,
extending the range computed in \cite{BrunWillwacher}. Again, the ranks of the top
differentials were certified by finding explicit elements of their kernels.
The ranks of the other differentials are certified correct if they take part in the computation of a vanishing homology group as explained above.
The ranks of the remaining differentials are (likely correct) lower bounds.  Hence for some homology groups the computed values are (likely correct) upper bounds. 
Some of those may in fact be shown to be exact, because the split projection $\Gamma_{n,h}\to \Aut(F_n)$ provides a matching lower bound on the homology.
Two further entries, namely $H_8(\Gamma_{4,4})$ and $H_8(\Gamma_{5,3})$ can be shown to be correct, because a matching lower bound on the dimension is provided by the partial assemblies of the Morita class in $H_8(\Out(F_{6}))$.
Namely, these partial assemblies provide an irreducible $S_4$ representation of type $[2,2]$ in $H_8(\Gamma_{4,4})$, showing that the dimension is at least 2. They also provide a one-dimensional irreducible representation of type $[2]$ in $H_8(\Gamma_{5,2})$. Gluing a (genus 0) tripod at one hair produces three classes, which are distinguished by the three forgetful maps, and hence all distinct. This shows that the dimension of $H_8(\Gamma_{5,3})$ is at least 3, matching the upper bound. This in turn certifies correctness of the ranks of the adjacent matrices, so that the other entries for $\Gamma_{5,3}$ are also exact.
Nevertheless, some non-certified values remain.
We indicated those upper bounds by "$\leq$" in the tables of Appendix \ref{app:tables}.

Our source code and the full integral kernel vectors for the top degree homology can be found under the following address.

\medskip 

\begin{center}
\url{https://github.com/grego/forestgc}
\end{center}


\subsection{The special case of $\Out(F_9)$}
The top degree homology of $\Out(F_9)$ displayed in Figure \ref{fig:results} was computed slightly differently, because the straightforward computation of the rank of the top degree differential was prohibitively expensive.
Instead, we use the following simplification. We desire to compute the kernel of the top differential $\delta_{uc}^{15}$. 
Let $v\in \FG^{(0)}(9,0)$ be an element of excess zero and degree 15 in the kernel, i.e., $\delta_{uc}^{15}v=0$.
Then clearly, the corresponding class $\bar v$ in the quotient $\FG(9,0)/\FG^{3e}(9,0)$ by the three-edge connected subcomplex satisfies $\bar \delta_{uc}^{15} \bar v=0$, where $\bar \delta_{uc}^{15}$ is the induced differential on the quotient.
Hence we may follow the following algorithm to compute $\ker \delta_{uc}^{15}$:
\begin{enumerate}
\item Compute an integral generating set of the rational kernel $\ker \bar \delta_{uc}^{15}$.
Remember the support $S$ of that generating set, which are all (non-3-edge-connected) forested graphs appearing non-trivially in the kernel vectors.
It turns out that the kernel is only two-dimensional and its support is fairly small, relative to the dimension of the graph complex. 
\item Let $V$ be the span of the degree 15 excess 0 part of $\FG^{3e}(9,0)$ and the forested graphs in $S$. 
Let $\tilde \delta_{uc}^{15}$ be the restriction of $\delta_{uc}^{15}$ to $V$.
Then we compute 
\[
\ker \tilde \delta_{uc}^{15} = \ker \delta_{uc}^{15}.
\]
The matrix of $\tilde \delta_{uc}^{15}$ is significantly smaller than that of $\delta_{uc}^{15}$, making the above computation (barely) feasible. 
\end{enumerate}

The following table shows the matrix dimensions and rank results.
\begin{center}
  \small
  \setlength{\tabcolsep}{5pt}
  \begin{tabular}{@{}lrrrr@{}}
    \toprule
    Matrix & Rows & Columns & Rank & Nullity \\
    \midrule
    $\bar \delta_{uc}^{15}$
      & $1\,066\,092\,084$ & $102\,997\,476$
      & $102\,997\,474$ & $2$ \\
    $\tilde \delta_{uc}^{15}$
      & $3\,245\,973\,555$ & $351\,644\,414$
      & $351\,644\,390$ & $24$ \\
    \bottomrule
  \end{tabular}
\end{center}
In each case, the simple Gaussian elimination as described above was performed on the matrices. Then a lower bound for the matrix ranks was computed modulo a prime $p$, along with generators for the kernel modulo $p$, using Coppersmith-Wiedemann. The modular kernel vectors were then lifted to integral kernel vectors to obtain matching coranks, and thus certify that the ranks are correct rationally.

\appendix

\section{On the 3-edge connected subcomplex}
We consider again the subcomplex
\[
(\FG^{3e}, d_c + d_u) \subset (\FG, d_c+d_u).
\]
spanned by 3-edge-connected graphs. The goal of this appendix is to show the following result.

\begin{proposition}\label{prop:3ec} $ $
    The inclusion of the 0-hair and loop order $n$ part $\FG^{3e}(n,0)\subset \FG(n,0)$ is a quasi-isomorphism as long as $n\leq 8$.
\end{proposition}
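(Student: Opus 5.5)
Since $\FG^{3e}(n,0)$ is a subcomplex, it suffices to show that the quotient complex
\[
Q(n) := \FG(n,0)/\FG^{3e}(n,0)
\]
is acyclic for $n\leq 8$. This quotient is spanned by forested bridgeless graphs that have a $2$-edge cut. The plan is to show that the two main structural results of Section~\ref{sec:FG} pass to $Q(n)$, and then to finish with the same exact rank computation used in Section~\ref{sec:methods results}.

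\textbf{Step 1: excess reduction on the quotient.} Filter $Q(n)$ by excess. I first need to know that $d_c$ preserves the subcomplex. Contracting an edge never lowers edge connectivity, so $d_c$ maps $\FG^{3e}$ into itself. Hence $d_c$ induces a differential on $Q(n)$ that raises excess by one. Next I want the analogue of Proposition~\ref{exc0}: the $d_c$-homology of $Q(n)$ is concentrated in excess $0$. The argument I would attempt runs through the Koszulness of the cyclic commutative operad, as for $\FG$. Locally around each vertex, the $d_c$-complex is a product of complexes of trees blowing up that vertex, so it is a bar-type construction. Whether the blown-up graph has a $2$-edge cut is governed by how the half-edges at each vertex are split. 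This is the delicate point: the $3$-edge-connectivity condition is not local. If the Koszul argument does not go through, I would instead argue only with $\FG$ and $\FG^{3e}$ separately, or compute the $d_c$-homology of $Q(n)$ numerically via Euler characteristics, exactly as in formula~\eqref{rankcomp}. The analogue of Corollary~\ref{restrictexcess0} would then reduce the problem to the complex $(\ker \bar\delta_c|_{Q^{(0)}}, \bar d_u)$.

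\textbf{Step 2: computation.} For each $n\leq 8$ and each degree $p$, apply formula~\eqref{rankcomp} to $Q(n)$. The inputs are the dimensions of $Q(n)$ in each excess and degree, together with the ranks of $\bar\delta^p_{uc}$ and $\bar\delta^p_c$. Both can be obtained as differences of the corresponding ranks for $\FG$ and $\FG^{3e}$, or computed directly with the sparse elimination and Wiedemann pipeline. Because the target statement is that every homology group of $Q(n)$ vanishes, a lower bound on each rank suffices to certify acyclicity, by the same argument used above: ranks enter~\eqref{rankcomp} with a negative sign, and the computed value cannot be negative. So no kernel certificates are needed. For small $n$, up to about $5$, the check can probably be done by hand or by exact rational elimination.

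\textbf{Main obstacle.} I expect the main obstacle to be Step 1, that is, justifying that the excess-$0$ reduction holds on the quotient without a fully local proof. The robust fallback is to avoid the reduction entirely. One uses the long exact sequence
\[
H(\FG^{3e}(n,0))\to H(\FG(n,0))\to H(Q(n))
\]
and proves acyclicity of $Q(n)$ directly by computing ranks of $d_u+d_c$ on the full quotient, including all excesses. This is heavier but is still feasible for $n\leq 8$ after the preliminary Gaussian elimination steps.
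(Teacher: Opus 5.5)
\textbf{Verdict: genuine gap.} Your main route (Steps 1--2) does not go through for $n=8$, and the fallback is a computation you do not carry out.

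\textbf{Step 1.} Concentration of the $d_c$-homology of $Q(n)$ in excess $0$ is not something you can expect to prove along the way. Take the long exact sequence of $0\to\FG^{3e}(n,0)\to\FG(n,0)\to Q(n)\to 0$ for $d_c$, together with Proposition~\ref{exc0}. It shows that concentration for $Q(n)$ follows from Conjecture~\ref{conj:3 exc 0}, and conversely implies that conjecture in all excesses $\geq 2$. The paper verifies the conjecture only numerically for $n\leq 7$. So your plan could be run for $n\leq 7$, but for $n=8$ Step~1 is exactly what the paper leaves open.

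Your numerical substitute for Step 1 is circular. The Euler-characteristic formula for $\vdim\ker\delta^p_c$ is valid only once concentration is known. Without it, $\rank\bar\delta^{p+1}_c$ enters \eqref{rankcomp} with a plus sign, so Wiedemann lower bounds no longer certify vanishing.

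\textbf{Step 2.} Ranks on the quotient are not differences of ranks on $\FG$ and $\FG^{3e}$. Degree by degree one has $\rank d_{\FG}=\rank d_{\FG^{3e}}+\rank\bar d_{Q}+\rank\partial$, with $\partial$ the connecting homomorphism. So the differences overestimate the quotient ranks. Plugging them into the rank formula just returns $\vdim H_p(\FG)-\vdim H_p(\FG^{3e})$, which can vanish while $H(Q(n))\neq 0$. Moreover, $H(\FG^{3e}(8,0))$ is itself known only conditionally on Conjecture~\ref{conj:3 exc 0}.

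\textbf{The fallback.} Computing ranks of $d_u+d_c$ on all of $Q(n)$ in every excess and using $\vdim H_p=\vdim C_p-\rank d_p-\rank d_{p+1}$ is logically sound, since lower bounds on ranks do certify vanishing there. But as written it is only a plan for a large unexecuted computation whose feasibility at $n=8$ you assert without support. It also gives no reason why the answer should be zero or why $8$ is the threshold.

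\textbf{The missing idea} is the cactus decomposition of 2- but not 3-edge-connected graphs. The paper proceeds as follows.
\begin{enumerate}
\item Filter $Q(n)$ by the number of edges of $C(\Gamma)$. This removes contraction of marked cactus edges from the differential.
\item Filter further by $K_\Gamma$, the total number of blocks of the hair partitions induced by the marked forest on the open 3ECCs.
\item The associated graded splits as $\bigoplus_C\bigl(\bigoplus_P\mathrm{Ca}_{C,P}\otimes\bigotimes_U Y_{P_U,C,U}\bigr)_{\Aut(C)}$.
\item $\mathrm{Ca}_{C,P}$ is acyclic whenever the $P$-exploded cactus has a univalent vertex, since marking the adjacent edge is a contracting homotopy.
\item Every cactus with at least two nodes has two bivalent nodes. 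So surviving homology needs the trivial partition at both, i.e.\ two tensor factors of $Y$.
\item Since $H(Y^g)=0$ for $g\leq 3$, nonzero homology forces loop order at least $4+4+1=9$.
\end{enumerate}
The only computer input is for the tiny complexes $Y^2$ and $Y^3$. The argument also explains why the bound is $n\leq 8$: $H(Y^4)\neq 0$ is where the argument stops.
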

The proof also indicates how to bound the homology of the quotient $$Q_n:=\FG(n,0)/\FG^{3e}(n,0)$$ for higher $n$, and demonstrates that it is generally small, though possibly non-zero.

Before the proof, let us recall some graph theoretical background. Let $\Gamma$ be a 2-edge-connected graph. 
On the set of vertices of $\Gamma$ define an equivalence relation such that vertices $u$ and $v$ are equivalent if they are connected by (at least) three edge-disjoint paths in $\Gamma$. The equivalence classes correspond to the 3-edge-connected components (3ECCs) of $\Gamma$. Contracting the full subgraphs of each equivalence class produces a graph $C(\Gamma)$, the cactus of $\Gamma$, with the property that each edge of $C(\Gamma)$ is contained in a unique simple cycle. The following example illustrates the procedure:
\begin{align*}
\Gamma = 
\begin{tikzpicture}
    \node[int] (v1) at (0,.5) {};
    \node[int] (v2) at (0,-.5) {};
    \node[int] (v3) at (1,.5) {};
    \node[int] (v4) at (1,-.5) {};
    \node[int] (w1) at (2.5,.5) {};
    \node[int] (w2) at (2.5,-.5) {};
    \node[int] (x1) at (4,.5) {};
    \node[int] (x2) at (4,-.5) {};
    \draw (v1) edge (v2) edge (v3) edge (v4) 
          (v2) edge (v3) edge (v4) 
          (v3) edge (v4) edge (w1) 
          (v4) edge (w2)
          (w1) edge (w2) edge (x1)
          (w2) edge (x2)
          (x1) edge[bend right] (x2) edge[bend left] (x2);
\end{tikzpicture}
&\to 
\begin{tikzpicture}
    \node[int] (v1) at (0,.5) {};
    \node[int] (v2) at (0,-.5) {};
    \node[int] (v3) at (1,.5) {};
    \node[int] (v4) at (1,-.5) {};
    \node[int] (w1) at (2.5,.5) {};
    \node[int] (w2) at (2.5,-.5) {};
    \node[int] (x1) at (4,.5) {};
    \node[int] (x2) at (4,-.5) {};
    \draw (v1) edge (v2) edge (v3) edge (v4) 
          (v2) edge (v3) edge (v4) 
          (v3) edge (v4) edge (w1) 
          (v4) edge (w2)
          (w1) edge (w2) edge (x1)
          (w2) edge (x2)
          (x1) edge[bend right] (x2) edge[bend left] (x2);
    \draw[dotted] (.5,0) circle (1);
    \draw[dotted] (2.5,0) ellipse (.5cm and 1cm);
    \draw[dotted] (4,0) ellipse (.5cm and 1cm);
\end{tikzpicture}
\\ & \to 
C(\Gamma)=
\begin{tikzpicture}
\node[ext] (U) at (0,0) {};
\node[ext] (V) at (1,0) {};
\node[ext] (W) at (2,0) {};
\draw (V) edge[bend right] (U) edge[bend left] (U)
    edge[bend right] (W) edge[bend left] (W);
\end{tikzpicture}
\end{align*}
We will call the vertices of $C(\Gamma)$ \emph{nodes} to distinguish them from the vertices of $\Gamma$.
Since each edge of $C(\Gamma)$ is contained in a unique simple cycle, the edges incident to each node are naturally paired, and in particular each node has even valence.
We say that the plain 3ECC corresponding to a node $V$ of $C(\Gamma)$ is the full subgraph of $\Gamma$ on the corresponding equivalence class of vertices.
The open 3ECC is the plain 3ECC together with the incident edges, which we consider as hairs. The closed 3ECC is obtained by pairing the hairs accordingly. For example for the middle node the corresponding plain, open and closed 3ECC are 
\[
\begin{tikzpicture}
    \node[int] (w1) at (2.5,.5) {};
    \node[int] (w2) at (2.5,-.5) {};
    \draw (w1) edge (w2);
\end{tikzpicture}
\quad\text{ and }\quad
\begin{tikzpicture}
    \node[int] (w1) at (2.5,.5) {};
    \node[int] (w2) at (2.5,-.5) {};
    \draw (w1) edge (w2) edge +(-.5,0) edge +(.5,0) 
    (w2) edge +(-.5,0) edge +(.5,0);
\end{tikzpicture}
\quad\text{ and }\quad
\begin{tikzpicture}
    \node[int] (w1) at (2.5,.5) {};
    \node[int] (w2) at (2.5,-.5) {};
    \draw (w1) edge (w2) edge[bend left] (w2) edge[bend right] (w2);
\end{tikzpicture}\, .
\]
In particular the plain and open 3ECCs might not be 3-edge connected (in fact, not even connected), while the closed 3ECC is always 3-edge connected. 
The graph $\Gamma$ may be recovered from $C(\Gamma)$ together with the 3ECCs associated to each component, by gluing the 3ECCs as is naturally prescribed by $C(\Gamma)$. Furthermore, each edge of $\Gamma$ corresponds uniquely to either an edge of $C(\Gamma)$ (then we call it a \emph{cactus edge}) or an edge of a plain 3ECC.

Also note that $\Gamma$ is 3-edge connected iff $C(\Gamma)$ consists of a single node.
Otherwise $C(\Gamma)$ must always have at least two nodes of valence two.

Next, we have the following compatibility of the cactus decomposition with edge contraction:

\begin{lemma}
    Let $\Gamma$ be a 2-edge-connected graph and fix some edge $e=(x,y)$ of $\Gamma$.
    Let $\Gamma/e$ be obtained by contracting $e$. 
    Then if $e$ is (i.e., corresponds to) an edge of a plain 3ECC of $\Gamma$ then $C(\Gamma/e)\cong C(\Gamma)$. Otherwise, $C(\Gamma/e)$ is obtained from $C(\Gamma)$ by contracting the edge (corresponding to) $e$, and removing a self-loop if one is created.  
\end{lemma}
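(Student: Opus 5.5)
The plan is to work with the equivalence relation "$u\sim v$ iff $u,v$ are joined by three edge-disjoint paths", equivalently iff no edge cut of size $\le 2$ separates them. Since $\Gamma$ is 2-edge-connected, a separating cut has size exactly $2$. So the 3ECCs are determined by the family of 2-edge cuts, and I will track how 2-edge cuts of $\Gamma$ and of $\Gamma/e$ correspond. The basic observation is that the edges of $\Gamma/e$ are $E(\Gamma)\setminus\{e\}$, and a vertex partition of $\Gamma/e$ is the same as a vertex partition of $\Gamma$ with $x,y$ on the same side. Consequently, an edge set $S\subset E(\Gamma)\setminus\{e\}$ is a cut of $\Gamma/e$ iff it is a cut of $\Gamma$ not separating $x$ from $y$. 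In particular, $\Gamma/e$ is again 2-edge-connected, and its 2-edge cuts are exactly the 2-edge cuts of $\Gamma$ avoiding $e$.

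\emph{Case 1: $e$ lies in a plain 3ECC}, so $x\sim y$. Then no 2-edge cut separates $x$ from $y$, hence no 2-edge cut contains $e$. The 2-edge cuts of $\Gamma/e$ are therefore exactly those of $\Gamma$. For vertices $u,v$ of $\Gamma$ (with $x,y$ identified), $u\sim v$ in $\Gamma/e$ iff no 2-cut separates them iff $u\sim v$ in $\Gamma$. So the equivalence classes correspond, with the class of $x,y$ just losing a vertex, and the cactus edges (edges between distinct classes) are unchanged. This gives $C(\Gamma/e)\cong C(\Gamma)$.

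\emph{Case 2: $e$ is a cactus edge}, joining classes $X\ni x$ and $Y\ni y$ with $X\neq Y$. Since every cactus edge lies in a unique simple cycle of $C(\Gamma)$, $e$ lies in 2-edge cuts, namely $\{e,f\}$ for $f$ another edge of the same cactus cycle. In $\Gamma/e$ we lose exactly the cuts containing $e$. For vertices $u,v$, the relation $u\not\sim v$ in $\Gamma/e$ means some 2-cut of $\Gamma$ avoiding $e$ separates them. The remaining step is to identify the new classes. I claim $X\cup Y$ merges into one class and all other classes are unchanged. For the merging: if $u\in X$ and $v\in Y$, any 2-cut of $\Gamma$ separating $u$ from $v$ must separate $x$ from $y$ (since $u\sim x$ and $v\sim y$), hence contains $e$; so in $\Gamma/e$ no cut separates them. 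For the unchanged classes, I need that two vertices $u\not\sim v$ in $\Gamma$, not both in $X\cup Y$, are still separated by a 2-cut avoiding $e$. Using the cactus structure, the classes $U\ni u$ and $V\ni v$ are distinct nodes, and I would pick a path in $C(\Gamma)$ and a cactus cycle on it. Cutting two edges of that cycle not equal to $e$ separates $U$ from $V$, and such a pair exists unless the cycle has length $2$ and consists of $e$ and one other edge with $\{U,V\}=\{X,Y\}$, which is excluded. The new cactus is then $C(\Gamma)$ with nodes $X,Y$ identified, i.e.\ $e$ contracted. If $e$ was in a cactus cycle of length $2$, the partner edge becomes a self-loop, which is no longer a cactus edge because it lies inside the merged class; hence it is removed. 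Cycles of length $\ge3$ simply shorten.

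The main obstacle is Case 2, namely rigorously justifying the cactus-based description of 2-edge cuts: that every 2-edge cut consists of two edges of one cactus cycle, and that any two distinct nodes are separated by such a pair avoiding a prescribed edge unless they are the endpoints of a 2-cycle containing it. I would establish this first from the unique-simple-cycle property of $C(\Gamma)$ together with the fact that cuts of $\Gamma$ using only cactus edges correspond to cuts of $C(\Gamma)$. A plain 3ECC edge cannot lie in a 2-cut, since its endpoints are $\sim$-equivalent.
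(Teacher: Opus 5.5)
The paper states this lemma without proof, so your argument has to stand on its own. Most of it does. The Menger reformulation is correct, and so is the identification of the 2-edge cuts of $\Gamma/e$ with the 2-edge cuts of $\Gamma$ not containing $e$ (so classes can only coarsen). Case 1, the merging of $X$ and $Y$ in Case 2, and the self-loop bookkeeping are also correct.

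The problem is the separation step in Case 2, which is also where you locate the main difficulty. You propose to establish that two distinct nodes can be separated by a 2-cut avoiding $e$ unless they are the ends of a 2-cycle containing $e$. That claim is false. Take $\Gamma$ to be three copies of $K_4$ joined in a ring by single edges $e,f,g$. Then $C(\Gamma)$ is a triangle on $X,Y,W$ with $e=XY$, and the only 2-cuts separating $X$ from $Y$ are $\{e,f\}$ and $\{e,g\}$. Your own merging argument shows that $X$ and $Y$ can never be separated without $e$, whatever the length of the cycle through $e$, so the claim as stated contradicts it.

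For the same reason, ``pick a cactus cycle on the path and cut two of its edges other than $e$'' fails for an arbitrary choice of cycle. Let a further node $U$ hang off $X$ via a 2-cycle and set $V=Y$. The triangle lies on every $U$--$V$ path, yet its only pair of edges avoiding $e$, namely $\{f,g\}$, cuts off $W$ and does not separate $U$ from $V$.

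The repair is to put the exception where it belongs. It is $\{U,V\}=\{X,Y\}$, for every cycle length, which is exactly the case you exclude, so your conclusion survives. Concretely:
\begin{enumerate}
\item Take a simple $U$--$V$ path in $C(\Gamma)$ and a cycle $Z$ containing one of its edges. Let $a\neq b$ be the endpoints of the arc in which the path meets $Z$; it is a single arc by the unique-cycle property.
\item The unique-cycle property also makes the parts of $C(\Gamma)$ hanging off distinct nodes of $Z$ disjoint. So deleting one edge from each of the two $a$--$b$ arcs of $Z$ separates $U$ from $V$ in $C(\Gamma)$, and hence, by lifting the vertex partition, in $\Gamma$.
\item This can be done while avoiding $e$ unless $e$ by itself forms one of the two arcs.
\item Since $e$ lies on only one cycle, if this obstruction occurred for every cycle met by the path, the path would lie entirely in the cycle through $e$. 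Then $\{U,V\}=\{a,b\}=\{X,Y\}$, the excluded case.
\end{enumerate}
With this replacement the proof is complete. The claim that every 2-cut lies in a single cactus cycle is not needed.
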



Finally, we study an auxiliary complex $Y$ that is a subquotient of the 2-hair forested graph complex $\FG(-,2)$. We take from that complex the subcomplex $Y'\subset\FG(-,2)$ that consists of graphs that become 3-edge connected when we connect the two hairs. Then we set
\[
Y = Y' /I,
\]
where $I$ is the subcomplex spanned by all marked graphs $(\Gamma,T)$ such that the vertices adjacent to the hairs are not connected in the forest $T$.
In other words, $Y$ is spanned by graphs such that the two hairs are attached to the same connected component in $T$.
We will need the following:
\begin{lemma}\label{lem:Y}
    The $g$-loop part $Y^g\subset Y$ satisfies
    \[
    H(Y^0)=H(Y^1)=H(Y^2)=H(Y^3)=0.
    \]
\end{lemma}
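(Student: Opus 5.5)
The plan is to reduce the statement to the excess-zero (Lie graph) model, as in Corollary \ref{restrictexcess0}, and then check vanishing loop order by loop order. For $g\le 3$ the complexes are finite and small, so the honest fallback is a direct computation with the same rank formula \eqref{rankcomp}. I would nonetheless look first for a structural argument, which should make the computation nearly trivial.

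\textbf{Reduction.} First I will check that $Y$ is closed under both differentials. Contracting a marked edge keeps the two hairs in one tree, and it keeps 3-edge-connectivity after closing the hairs, since contraction never decreases edge connectivity. Unmarking an edge may separate the two hair-vertices; those terms land in $I$ and vanish in the quotient. I will also check that $I\subset Y'$ is a subcomplex, so the quotient makes sense.

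Next I filter $Y$ by excess and take the $d_c$ spectral sequence. I would argue that the Koszulness argument behind Proposition \ref{exc0} applies verbatim: $d_c$-homology is computed locally on the marked trees, i.e.\ vertex by vertex after contracting $T$. The two conditions defining $Y$ depend only on the underlying graph with $T$ contracted, together with the datum of which component carries the hairs. Both are preserved by $d_c$. Hence $d_c$-homology is again concentrated in excess $0$. The care needed here is in the degenerate cases $g=0,1$, where the hypothesis $n\ge 1$ of Proposition \ref{exc0} fails. These I treat by hand.

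\textbf{Small loop orders.} For $g=0$, the graph is a tree with two hairs whose hair-vertices lie in one component of the spanning forest $T$. Because the forest spans all vertices and the graph is a tree, $T$ is the whole graph. The trees then form the standard complex computing the commutative operad in arity two. Its only candidate class is the single edge joining the two hairs, which has no internal vertex and is excluded by valence, or sits in degree $0$ and is killed by $d_u$. I expect acyclicity to follow from a contracting homotopy: insert or remove a marked edge at the hair-vertex.

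For $g=1,2,3$, I would use the homotopy suggested by the quotient by $I$. The two hair-vertices are joined by a unique path $P$ in $T$. I filter by the number of edges of $P$, or by the structure at the first hair. The candidate homotopy marks or unmarks the edge of $G$ incident to hair $1$ and adjacent to $P$. The leading differential is then the unmarking of that one edge, which is acyclic, as a cone.

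\textbf{Main obstacle.} The difficulty is that marking an edge may create a cycle in $T$, so the homotopy is not always defined, and the unmarked-versus-marked pairing fails exactly on graphs where the relevant edge is part of a multi-edge or tadpole. For $g\le3$ the remaining configurations are few, so I expect to finish by enumerating the excess-zero generators (trivalent graphs with at most $3$ loops and $2$ hairs, 3-edge-connected after closing) and verifying via \eqref{rankcomp} that all ranks match. This case check is where I anticipate the work to lie.
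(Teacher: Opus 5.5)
Your Reduction step does not hold as argued, and the rest of your plan rests on it. It is not true that the conditions defining $Y$ depend only on $G/T$ plus the position of the hairs. The condition coming from the quotient by $I$ does: both hairs must sit at the same vertex of $G/T$. But 3-edge-connectivity does not, because contracting marked edges can destroy 2-edge cuts. A 2-cut of $G$ (with the hairs joined) may consist of a marked and an unmarked edge, or of two marked edges lying in different trees of $T$. So for fixed $G/T$ the $d_c$-complex is not the tensor product of the full tree complexes at the vertices. It is a subcomplex of trees avoiding certain splits, and these constraints are coupled across vertices. Such subcomplexes need not have homology concentrated in trivalent trees: if every split at some $4$-valent vertex were forbidden, only the corolla would survive, in excess $1$. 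This is exactly why the paper states the analogous claim for $\FG^{3e}(n,0)$ only as Conjecture~\ref{conj:3 exc 0}. Without that concentration, neither Corollary~\ref{restrictexcess0} nor the Euler-characteristic formula for $\dim\ker\delta^p_c$ entering \eqref{rankcomp} is available for $Y$. Your fallback of checking ranks on excess-zero generators therefore does not compute $H(Y^g)$.

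The repair is what the paper does. For $g\le 3$ the complex $Y^g$ is finite and small in \emph{all} excesses, so compute its homology directly with $d_c+d_u$; the paper does this by computer for $g=2,3$. The low cases need no homotopy. First, $Y^0=0$ outright, because a tree whose non-leaf vertices all have valence $\ge 3$ has at least three leaves. Second, $Y^1$ is spanned by exactly two generators: the double edge with one hair at each end and one parallel edge marked, and the single vertex with a tadpole carrying both hairs. The former maps to $\pm$ the latter under $d_c$, while its unmarking term lies in $I$. Also, only $g=0$ violates the hypothesis $n\ge 1$ of Proposition~\ref{exc0}, not $g=1$. Your path-based homotopy is not made precise and, as you note yourself, breaks down, so it cannot replace the computation for $g=2,3$.
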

\begin{proof}
    We have that $Y^0=0$ since there is no tree with at least one $\geq 3$-valent vertex and two leaves. $Y^1$ is spanned by two graphs which cancel under the differential.
    \begin{align*}
            \begin{tikzpicture}
            \node[int] (v) at (0,0) {};
            \node[int] (vv) at (.7,0) {};
            \node (e1) at (-.7,0) {$\scriptstyle 1$};
            \node (e2) at (1.4,0) {$\scriptstyle 2$};
            \draw (v) edge (vv) edge (e1) edge[bend left, dashed] (vv) 
                  (vv) edge (e2);
        \end{tikzpicture}
        \mapsto 
        \begin{tikzpicture}
            \node[int] (v) at (0,0) {};
            \node (e1) at (-.7,0) {$\scriptstyle 1$};
            \node (e2) at (.7,0) {$\scriptstyle 2$};
            \draw (v) edge[loop above, dashed] (v) edge (e1) edge (e2);
        \end{tikzpicture}
    \end{align*}
    The cases $g=2,3$ are similar, finite-dimensional, but more elaborate computations which we checked on the computer. 
\end{proof}

We also note that $Y^g$ is not generally acyclic. Again by numerical computation we find that $H_k(Y^4)$ is concentrated in the top degree $k=7$ and three-dimensional there, in the underlying $S_2$-representation $V_{2}\oplus 2V_{1^2}$.

\begin{proof}[Proof of Proposition \ref{prop:3ec}]
The statement that the inclusion $\FG^{3e}(n,0)\subset \FG(n,0)$ is a quasi-isomorphism for $n\leq 8$ is equivalent to showing that the quotient complex $Q_n$ generated by 2-edge- but not 3-edge-connected graphs is acyclic for $n\leq 8$.
To see that $Q_n$ is acyclic, we endow $Q_n$ with a filtration by declaring $\Gamma$ to be of filtration degree equal to the number of edges in the cactus $C(\Gamma)$.
It is then sufficient to show that the associated graded complex $\gr Q_n$ is acyclic.
We may identify
\[
\gr Q_n \cong  (Q_n, d_c' + d_u' + d_u^{cactus})
\]
where $d_c'$ and $d_u'$ contract (resp. unmark) edges in one of the 3ECCs, while $d_u^{cactus}$ unmarks a cactus edge. In other words, by passing to the associated graded we have removed from the differential the term that contracts marked cactus edges.

To go further, consider one open 3ECC $\gamma$ of a graph $\Gamma$ with marked sub-forest $T\subset \Gamma$. Let $\gamma'\subset \gamma$ be the corresponding plain 3ECC, which is just obtained by removing the hairs of $\gamma$. Then $T':=T\cap \gamma'$ is a marked subforest of the hairy graph $\gamma$. As such it partitions the set $H_\gamma$ of hairs of $\gamma$ into say $k_\gamma$ disjoint subsets, with two hairs $h,h'$ in the same subset if their adjacent vertices in $\gamma'$ are connected in $T'$. The differential can break connectivity in this sense, but never create it.
Hence we may filter $\gr Q_n$ by the number $K_\Gamma:=\sum_{\gamma} k_\gamma$, with the sum over all 3ECCs.
Again it suffices to check that the associated graded $\gr^K\gr Q_n$ is acyclic.
We have 
\[
\gr^K\gr Q_n \cong (Q_n, d_c'+d_u''+d_{u}^{cactus}),
\]
where now $d_u''$ unmarks edges in the 3ECCs, but only if this does not disconnect hairs in the 3ECCs in the sense above. In other words, the differential now leaves invariant both the cactus $C(\Gamma)$, as well as the partitions $P=\{P_U\}_U$ associated to each vertex $U$ of $C(\Gamma)$, with $P_U$ being a partition of the set of edges incident to $U$.
Furthermore, our complex splits:
\newcommand{\Ca}{\mathrm{Ca}}
\begin{equation}\label{equ:big split}
\bigoplus_n \gr^K\gr Q_n \cong \bigoplus_{C}\left(\bigoplus_P \Ca_{C,P} \otimes \bigotimes_{U\in VC} Y_{P_U, C, U}\right)_{\Aut(C)},
\end{equation}
where the outer sum is over isomorphism classes of cactus graphs with at least two vertices, and the inner sum over assignments of partitions $P=\{P_U\}_U$ of the sets of edges incident to each vertex of $C$.
The complex $\Ca_{C,P}$ is a complex of linear combinations of admissible markings of $C$, where an admissible marking is a subset of the edge set such that after splitting all vertices according to the partition the marked edges do not contain a cycle.
In the following we shall call the latter graph the $P$-exploded graph of $C$.
The complexes $Y_{P_U,C,U}$ associated to each vertex $U$ are spanned by the possible 3ECCs at that vertex, with a marked forest, so that the hair partition is $P_U$. The precise form of the $Y_{P_U,C,U}$ will be irrelevant for us, except noting that if $U$ has valence 2 in $C$ and $P_U$ is the trivial partition (i.e., $\{\{1,2\}\}$ for an appropriate numbering), then $Y_{P_U,C,U}=Y$ as defined before.

The motivation behind the filtration is that in \eqref{equ:big split} the parts of the complex corresponding to individual 3ECCs "decouple" from each other and the complex of markings on the cactus, so that we may compute the homology separately in each factor of the tensor product.

Now we study the factors individually:
First note that $H(\Ca_{C,P})=0$ if the exploded graph has univalent vertices, as marking an edge connected to a univalent vertex provides a homotopy.
Since $C$ must have at least 2 bivalent vertices, say $U$, $V$, we hence conclude that nontrivial homology can arise only if $P_U$ and $P_V$ are the trivial partitions.
But then $Y_{P_U,C,U}$ and $Y_{P_V,C,V}$ are both isomorphic to the auxiliary complex $Y$ studied above. By Lemma \ref{lem:Y} we hence know that $H(Y^{g-loop})=0$ for $g\leq 3$. Hence even the minimal $C$ with two vertices leads to a graph of loop order $2g+1\geq 9$, so that below that loop order $H(\gr^K\gr Q_n)=0$ as desired.
\end{proof}

The 3-edge-connected graph complex $\FG^{3e}(n,0)$ is significantly smaller than its 2-edge-connected counterpart $\FG(n,0)$. However, to be able to efficiently compute the homology, we also need to be able to restrict to the part of excess 0, as in Corollary \ref{restrictexcess0}.
We conjecture that the analogous result for $\FG^{3e}(n,0)$ is true as well:
\begin{conjecture}\label{conj:3 exc 0}
    The homology $H(\FG^{3e}(n,0), d_c)$ is concentrated in excess 0 for all $n$.
\end{conjecture}
We have verified the conjecture for $n\leq 7$ by numerical computation.
If it holds, then using the 3-edge-connected complex can significantly reduce computational load. 
The following table contains the dimensions and ranks for $\FG^{3e}(8,0)$, with the last two lines in the tables conditional on Conjecture \ref{conj:3 exc 0}.
{
\small
\begin{center}
\begin{tabular}{|c|c c c c|}
\hline
degree $p$ & 6 & 7 & 8 & 9  \\
\hline
dim $\FG^{3e, (0)}(8, 0)_p$ & 9 892 784 & 20 586 818 & 34 453 618 & 46 244 526  \\
rank $\delta^p_{uc}$ & 9 741 866 & 20 434 053 & 34 334 488 & 46 176 403 \\
rank $\delta^p_{c}$ & 9 623 925 & 20 283 135 & 34 181 723 & 46 057 273 \\
\hline
dim $H_p(\Out(F_8))$ & 0 & 0 & 0 & 0 \\
\hline
\end{tabular}\par
\begin{tabular}{|c|c c c c|}
\hline
degree $p$ &  10 & 11 & 12 & 13  \\
\hline
dim $\FG^{3e, (0)}(8, 0)_p$ & 49 017 127 & 39 540 052 & 22 230 026 & 6 737 887 \\
rank $\delta^p_{uc}$ & 48 989 400 & 39 532 301 & 22 228 848 & 6 737 884 \\
rank $\delta^p_{c}$ & 48 921 277 & 39 504 574 & 22 221 097 & 6 736 709 \\
\hline
dim $H_p(\Out(F_8))$ & 0 & 0 & 3 & 3 \\
\hline
\end{tabular}
\end{center}
}
Mind that we obtain the same result for the homology as in Table \ref{fig:out8figure}, providing further evidence in favor of Conjecture \ref{conj:3 exc 0}, at a greatly reduced computational cost.

\section{The girth filtration}
We say the \emph{girth} of a forested graph $(G, F)$ is the length of the smallest cycle in the graph obtained from $G$ by contracting all edges in $F$. We can immediately see that contracting a marked edge in a forested graph does not change its girth and unmarking increases its girth by at most 1. 
Denote by $\FG^{3e,\geq 2}(n,h)\subset \FG^{3e}(n,h)$ the subspace spanned by graphs of girth $\geq 2$ and let 
\[
\FG^{3e,1}(n,h) = \FG^{3e}(n,h) / \FG^{3e,\geq 2}(n,h)
\]
be the quotient. Then we have a short exact sequence of chain complexes
\[
0\to \FG^{3e,\geq 2}(n,h) \to \FG^{3e}(n,h) \to \FG^{3e,1}(n,h) \to 0,
\]
that induces a corresponding long exact sequence in homology

\begin{multline}\label{equ:girth les}
\cdots \to H_k(\FG^{3e,\geq 2}(n,h))
\to H_k(\FG^{3e}(n,h))
\\
\to H_k(\FG^{3e,1}(n,h))
\xrightarrow{d_{1,k}} H_{k-1}(\FG^{3e,\geq 2}(n,h)) \to \cdots,
\end{multline}

where we denote by $d_{1,k}$ the connecting homomorphism.
The above girth long exact sequence allows us to obtain a lower bound on the dimension of the homology of $\FG^{3e,1}(n,h)$ that is even exact if $d_{1,k}$ has full rank, at a greatly reduced computational cost.

\begin{proposition}\label{prop:girth bound}
For fixed $n,h$ let 
\begin{align*}
h_k^1 &:= \vdim H_k(\FG^{3e,1}(n,h)) &
h_k^{\geq 2} &:= \vdim H_k(\FG^{3e,\geq 2}(n,h)) \\
    m_k &:= \min\{ h_k^1, h_{k-1}^{\geq 2} \}. & & 
\end{align*}
Then 
\[
\vdim H_k(\FG^{3e}(n,h))
\geq h_k^1 + h_k^{\geq 2} - m_{k+1}-m_k,
\]
and equality holds if $d_{1,k}$ and $d_{1,k+1}$ each have full rank $m_k$ resp. $m_{k+1}$.
\end{proposition}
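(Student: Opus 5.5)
The plan is to read the bound off the long exact sequence \eqref{equ:girth les} by exactness. Write $A_k = H_k(\FG^{3e,\geq 2}(n,h))$, $B_k = H_k(\FG^{3e}(n,h))$ and $C_k = H_k(\FG^{3e,1}(n,h))$. The relevant portion of the sequence is
\[
C_{k+1}\xrightarrow{d_{1,k+1}} A_k \xrightarrow{i} B_k \xrightarrow{p} C_k \xrightarrow{d_{1,k}} A_{k-1}.
\]

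By exactness at $B_k$ we have
\[
\vdim B_k = \vdim \operatorname{im} i + \vdim \operatorname{im} p .
\]
Exactness at $A_k$ gives $\operatorname{im} i \cong A_k/\operatorname{im} d_{1,k+1}$, so $\vdim\operatorname{im} i = h_k^{\geq 2} - \rank d_{1,k+1}$. Exactness at $C_k$ gives $\operatorname{im} p = \ker d_{1,k}$, so $\vdim \operatorname{im} p = h_k^1 - \rank d_{1,k}$. Together these yield the exact formula
\[
\vdim H_k(\FG^{3e}(n,h)) = h_k^1 + h_k^{\geq 2} - \rank d_{1,k} - \rank d_{1,k+1}.
\]

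It remains to bound the ranks. Since $d_{1,k}\colon C_k\to A_{k-1}$, its rank is at most $\min\{\vdim C_k,\vdim A_{k-1}\} = \min\{h_k^1,h_{k-1}^{\geq 2}\} = m_k$. Likewise $\rank d_{1,k+1}\le \min\{h_{k+1}^1,h_k^{\geq 2}\}=m_{k+1}$. Substituting these bounds gives the claimed inequality. If both connecting maps have full rank $m_k$ and $m_{k+1}$, the substitution is an equality, which gives the equality clause.

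There is no genuine obstacle in this argument. The only care needed is in the indexing: the connecting map lowers degree by one, so $m_k$ pairs $h_k^1$ with $h_{k-1}^{\geq 2}$. One should also confirm that the sequence is exact, which follows because $\FG^{3e,\geq 2}(n,h)$ is a subcomplex. This holds because contracting a marked edge preserves girth and unmarking does not decrease it. All spaces involved are finite-dimensional for fixed $n,h$.
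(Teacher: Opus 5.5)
Your proof is correct and follows essentially the paper's route. The paper likewise reads off $\vdim H_k(\FG^{3e}(n,h)) = h_k^1 + h_k^{\geq 2} - \rank d_{1,k} - \rank d_{1,k+1}$ from the girth long exact sequence, then bounds each rank by $\min$ of the dimensions of source and target. Your remarks on the degree shift and on why $\FG^{3e,\geq 2}(n,h)$ is a subcomplex fill in details the paper leaves implicit.
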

\begin{proof}
    Let $r_k$ be the rank of $d_{1,k}$. Then from the girth long exact sequence \eqref{equ:girth les} above we see that 
    \[
    \vdim H_k(\FG^{3e}(n,h)) = h_k^1 + h_k^{\geq 2} - r_k - r_{k+1},
    \]
    and the statement of the proposition easily follows.
\end{proof}

By explicit numerical computation we observed the following:

\begin{proposition}\label{prop:d1 full rank}
For $n\leq 7$, $h=0$ and any $k$ the connecting homomorphism $d_{1,k}$ has full rank. 
\end{proposition}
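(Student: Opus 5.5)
This statement is a computational observation, so the plan is to verify it numerically, reusing the pipeline of Section \ref{sec:methods results}. For each $n\leq 7$ we work with the three complexes $\FG^{3e}(n,0)$, $\FG^{3e,\geq 2}(n,0)$ and $\FG^{3e,1}(n,0)$. By the rank-counting in the proof of Proposition \ref{prop:girth bound}, the homomorphism $d_{1,k}$ has full rank for every $k$ exactly when
\[
\vdim H_k(\FG^{3e}(n,0)) = h_k^1 + h_k^{\geq 2} - m_k - m_{k+1}
\]
holds for every $k$. The reason is that $r_k\leq m_k$ for all $k$, and the identity $\vdim H_k = h^1_k+h^{\geq 2}_k-r_k-r_{k+1}$ then forces $r_k=m_k$ once the equality holds in every degree. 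It therefore suffices to compute three families of Betti numbers and check this identity. We do not compute the connecting map directly.

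\textbf{Step 1: the Betti numbers of $\FG^{3e}(n,0)$.} For $n\leq 7$ these are not the real unknowns. Proposition \ref{prop:3ec} gives $H(\FG^{3e}(n,0))\cong H(\FG(n,0))=H(\Out(F_n))$, and these values are known from Table \ref{tbl:results}.

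\textbf{Step 2: the two girth pieces.} We generate the 3-edge-connected forested graphs of loop order $n$ and split them according to whether the graph with its forest contracted has a loop (a self-edge). Girth $\geq 2$ is a subcomplex and girth $1$ is the quotient, so we assemble the matrices of $d_u+d_c$ restricted to each piece and compute the ranks as before: sparse elimination followed by block Wiedemann modulo a prime.

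Here we cannot simply restrict to excess $0$, because Corollary \ref{restrictexcess0} is not known for these girth subquotients; for the 3-edge-connected complex it is only the conjecture \ref{conj:3 exc 0}, which has been checked for $n\leq 7$. The safe choice is to work with the full complexes, summed over all excesses. For $n\leq 7$ they are small enough, and the resulting matrices are much smaller than the $\Out(F_8)$ ones. We then obtain each $h_k^1$ and $h_k^{\geq 2}$ as the dimension of the chain group minus the two adjacent ranks.

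\textbf{Step 3: certification.} This is the main obstacle, since block Wiedemann gives only lower bounds on ranks, hence upper bounds on the $h$'s. The plan is to exploit that the needed inequality goes in the right direction. By Proposition \ref{prop:girth bound}, the exact value $\vdim H_k(\FG^{3e})$ is at least $h^1_k+h^{\geq2}_k-m_k-m_{k+1}$. If the computed, possibly overestimated, values already make this expression equal to the true $\vdim H_k$, then:
\begin{itemize}
\item whenever the computed $h$ is $0$, the underlying rank is certified, because homology cannot be negative;
\item in the few degrees where both $h^1_k$ and $h^{\geq 2}_{k-1}$ are nonzero, mostly near the top degree, we certify the ranks by lifting modular kernel vectors to explicit integral kernel elements, as was done for the top differential of $\Out(F_8)$.
\end{itemize}
Once every rank is certified, the equality is exact, and full rank of all $d_{1,k}$ follows.
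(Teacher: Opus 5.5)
Your proposal is correct and follows the paper's own argument: compute $h^1_k$ and $h^{\geq 2}_k$ numerically and compare them with $H(\FG^{3e}(n,0))\cong H(\Out(F_n))$ through the rank identity behind Proposition \ref{prop:girth bound}. The real difference is that the paper works in excess $0$, and this is in fact justified: $d_c$ preserves girth, so the verified case $n\leq 7$ of Conjecture \ref{conj:3 exc 0} splits over the two girth pieces; your computation on the full complexes is a valid but costlier substitute. As an aside, for $n\leq 7$ the groups $H_{k-1}(\Out(F_n))$ and $H_k(\Out(F_n))$ are never both nonzero, so exactness of \eqref{equ:girth les} alone already forces each $d_{1,k}$ to be injective or surjective, hence of full rank, which makes the numerics and your Step 3 certification dispensable.
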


More precisely, since the contraction differential does not change the girth, it follows from our verification of Conjecture \ref{conj:3 exc 0} for $n\leq 7$ that the $d_c$-homology of $\FG^{3e,1}(n,0)$ and $\FG^{3e,\geq 2}(n,0)$ is concentrated in excess zero. We can hence compute the homology of these complexes by restricting to the excess zero part, like we did for $\FG$ before.
We then obtained the following data, which together with the known homology tables for $H_k(\FG^{3e}(n,0))$ imply that $d_{1,k}$ has full rank. We also list the $n=8$ case for reference, though we do not know whether Conjecture \ref{conj:3 exc 0} holds for $n=8$.

\begin{center}

$n=4$:

\begin{tabular}{c|ccc}
Degree & 3 & 4 & 5 \\
\hline
Girth = 1 & 0 & 1 & 0 \\
Girth $\geq$ 2 & 0 & 0 & 0 \\
\hline
$H_*(\FG^{3e}(4,0))$ & 0 & 1 & 0 \\
\end{tabular}

\medskip

$n=6$:

\begin{tabular}{c|cccccc}
Degree & 4 & 5 & 6 & 7 & 8 & 9 \\
\hline
Girth = 1 & 0 & 0 & 2 & 0 & 1 & 0 \\
Girth $\geq$ 2 & 0 & 2 & 0 & 0 & 0 & 0 \\
\hline
$H_*(\FG^{3e}(6,0))$ & 0 & 0 & 0 & 0 & 1 & 0 \\
\end{tabular}

\medskip

$n=7$:

\begin{tabular}{c|ccccccc}
Degree & 5 & 6 & 7 & 8 & 9 & 10 & 11 \\
\hline
Girth = 1 & 0 & 0 & 0 & 4 & 1 & 0 & 1 \\
Girth $\geq$ 2 & 0 & 0 & 4 & 2 & 0 & 0 & 0 \\
\hline
$H_*(\FG^{3e}(7,0))$ & 0 & 0 & 0 & 1 & 0 & 0 & 1 \\
\end{tabular}

\medskip

$n=8$:

\begin{tabular}{c|cccccccc}
Degree & 6 & 7 & 8 & 9 & 10 & 11 & 12 & 13 \\
\hline
Girth = 1 & 0 & 0 & 0 & 27 & 4 & 0 & 3 & 3 \\
Girth $\geq$ 2 & 0 & 0 & 27 & 4 & 0 & 0 & 0 & 0 \\
\hline
$H_*(\FG^{3e}(8,0))$ & 0 & 0 & 0 & 0 & 0 & 0 & 3 & 3  \\
\end{tabular}

\end{center}

Also note that the resulting homology is always concentrated in the girth 1 part, except for $H_8(\FG^{3e}(7,0))$, which is concentrated in girth $\geq 2$. 
Proposition \ref{prop:d1 full rank} leads us to raise the following conjecture:

\begin{conjecture}\label{conj:girth}
    The connecting homomorphism $d_{1,k}$ has full rank for any $n$ and $k$ and $h=0$.
\end{conjecture}

If Conjectures \ref{conj:3 exc 0} and \ref{conj:girth} hold, then they could help to substantially reduce the computational cost of computing $H_k(\Out(F_n))$ by Proposition \ref{prop:girth bound}.
To illustrate the computational savings, consider the case of $n=7$, where the conjectures are true. Here the dimension of the largest degree component in the (excess zero part of the) full forested graph complex is around 2.6 million. In comparison, the individual 3-edge-connected girth components are fairly manageable as the following tables show. 

\begin{center}
\small
\begin{tabular}{|c|c c c c c c c|}
\hline
degree $p$ & 5 & 6 & 7 & 8 & 9 & 10 & 11  \\
\hline
dim $\FG^{3e, 1,(0)}(7,0)_p$ & 53 102 & 173 389 & 395 591 & 624 127 & 655 197 & 435 819 & 151 001 \\
rank $\delta^p_{uc}$ & 51 363 & 170 937 & 393 108 & 622 519 & 654 677 & 435 748 & 151 000 \\
rank $\delta^p_{c}$ & 50 545 & 169 198 & 390 656 & 620 036 & 653 073 & 435 229 & 150 929 \\
\hline
dim $H_p(\FG^{3e, 1,(0)}(7,0))$ & 0 & 0 & 0 & 4 & 1 & 0 & 1 \\
\hline
\end{tabular}

\begin{tabular}{|c|c c c c c c c|}
\hline
degree $p$ & 5 & 6 & 7 & 8 & 9 & 10 & 11  \\
\hline
dim $\FG^{3e, \geq 2, (0)}(7,0)_p$ & 114 916 & 177 944 & 174 575 & 93 264 & 20 604 & 985 & 0 \\
rank $\delta^p_{uc}$ & 112 410 & 176 462 & 174 033 & 93 178 & 20 604 & 985 & 0 \\
rank $\delta^p_{c}$ & 109 866 & 173 956 & 172 551 & 92 640 & 20 520 & 985 & 0 \\
\hline
dim $H_p(\FG^{3e, \geq 2, (0)}(7,0))$ & 0 & 0 & 4 & 2 & 0 & 0 & 0 \\
\hline
\end{tabular}

\end{center}

Mind that in the case $n\geq 9$ the inclusion $\FG^{3e}(n,0)\to \FG(n,0)$ is generally not a quasi-isomorphism. The quotient has small homology, which however also needs to be studied to derive statements on $H_k(\Out(F_n))$.

\section{Tables of numerical results}
\label{app:tables}

\begin{table}[H]
\small
\begin{center}
\begin{tabular}{|c|c c c c|}
\hline
degree $p$ & 6 & 7 & 8 & 9  \\
\hline
dim $\FG^{(0)}(8,0)_p$ & 48 898 745 & 92 090 388 & 139 340 908 & 168 883 559  \\
rank $\delta^p_{uc}$ & 48 180 639 & 91 442 938 & 138 890 887 & 168 652 756 \\
rank $\delta^p_{c}$ & 47 557 019 & 90 724 832 & 138 243 437 & 168 202 735 \\
\hline
dim $H_p(\Out(F_8))$ & 0 & 0 & 0 & 0 \\
\hline
\end{tabular}\par
\begin{tabular}{|c|c c c c|}
\hline
degree $p$ &  10 & 11 & 12 & 13  \\
\hline
dim $\FG^{(0)}(8,0)_p$ & 161 342 472 & 117 024 310 & 59 084 396 & 16 157 290 \\
rank $\delta^p_{uc}$ & 161 258 633 & 117 003 755 & 59 081 662 & 16 157 287 \\
rank $\delta^p_{c}$ & 161 027 830 & 116 919 916 & 59 061 107 & 16 154 556 \\
\hline
dim $H_p(\Out(F_8))$ & 0 & 0 & 3 & 3 \\
\hline
\end{tabular}
\caption{Dimension of excess 0 parts, ranks of $\delta_{uc}$, ranks of $\delta_c$
and the resulting homology of $\Out(F_8)$.}
\label{fig:out8figure}
\end{center}
\end{table}

\begin{table}[H]
\begin{center}
\small
\begin{tabular}{|c|c c c c|}
\hline
degree $p$ & 5 & 6 & 7 & 8  \\
\hline
dim $\FG^{(0)}(7,1)_p$ & 20 582 184 & 42 630 875 & 69 290 559 & 88 620 629 \\
rank $\delta^p_{uc}$ & 19 990 698 & 42 045 098 & 68 850 176 & 88 375 972 \\
rank $\delta^p_{c}$ & 19 542 757 & 41 453 612 & 68 264 399 & 87 935 589 \\
\hline
dim $H_p(\Aut(F_7))$ & 0 & 0 & 0 & 1 \\
\hline
\end{tabular}\par
\begin{tabular}{|c|c c c c|}
\hline
degree $p$ & 9 & 10 & 11 & 12  \\
\hline
dim $\FG^{(0)}(7,1)_p$ & 88 097 414 & 65 712 647 & 33 785 982 & 9 339 756 \\
rank $\delta^p_{uc}$ & 88 002 555 & 65 688 907 & 33 782 845 & 9 339 754 \\
rank $\delta^p_{c}$ & 87 757 899 & 65 594 048 & 33 759 105 & 9 336 618 \\
\hline
dim $H_p(\Aut(F_7))$ & 0 & 0 & 1 & 2 \\
\hline
\end{tabular}
\end{center}
\caption{\label{fig:aut7figure}
Dimension of excess 0 parts, ranks of $\delta_{uc}$, ranks of $\delta_c$
and the resulting homology of $\Aut(F_7)$}
\end{table}

\begin{table}[H]
\small
\begin{center}\begin{tabular}{|c|c c c c|}
\hline
degree $p$ & 0 & 1 & 2 & 3  \\
\hline
dim $\FG^{(0)}(6,2)_p$ & 4 999 & 72 326 & 510 036 & 2 287 435  \\
rank $\delta^p_{uc}$ & 0 & 39 404 & 396 151 & 2 029 624 \\
rank $\delta^p_{c}$ & 0 & 34 406 & 363 229 & 1 915 739 \\
\hline
dim $H_p(\Gamma_{6, 2})$ & 1 & 0 & 0 & 0 \\
\hline
\end{tabular}\par
\begin{tabular}{|c|c c c c|}
\hline
degree $p$ & 4 & 5 & 6 & 7  \\
\hline
dim $\FG^{(0)}(6,2)_p$ & 7 218 916 & 16 869 131 & 29 984 986 & 40 954 172  \\
rank $\delta^p_{uc}$ & 6 814 085 & 16 413 864 & 29 610 189 & 40 729 719 \\
rank $\delta^p_{c}$ & 6 556 274 & 16 009 033 & 29 154 922 & 40 354 922 \\
\hline
dim $H_p(\Gamma_{6, 2})$ & 0 & 0 & 0 & 0 \\
\hline
\end{tabular}\par
\begin{tabular}{|c|c c c c|}
\hline
degree $p$ &  8 & 9 & 10 & 11  \\
\hline
dim $\FG^{(0)}(6,2)_p$ & 42 687 751 & 32 912 781 & 17 300 428 & 4 853 849 \\
rank $\delta^p_{uc}$ & 42 594 705 & 32 888 481 & 17 297 194 & 4 853 842 \\
rank $\delta^p_{c}$ & 42 370 252 & 32 795 437 & 17 272 894 & 4 850 613 \\
\hline
dim $H_p(\Gamma_{6, 2})$ & 2 & 0 & 5 & 7 \\
\hline
\end{tabular}
\caption{\label{fig:gamma62figure} Dimension of excess 0 parts, ranks of $\delta_{uc}$, ranks of $\delta_c$
and the resulting homology of $\Gamma_{6, 2}$}
\end{center}
\end{table}

\begin{table}[H]
\small
\begin{center}\begin{tabular}{|c|c c c c c|}
\hline
degree $p$ & 0 & 1 & 2 & 3 & 4  \\
\hline
dim $\FG^{(0)}(5,3)_p$ & 5 905 & 78 996 & 500 586 & 1 978 189 & 5 413 269 \\
rank $\delta^p_{uc}$ & 0 & 42 840 &	389 282 & 1 761 222 & 5 126 559 \\
rank $\delta^p_{c}$ & 0 & 36 936 & 353 126 & 1 649 918 & 4 909 592 \\
\hline
dim $H_p(\Gamma_{5, 3})$ & 1 & 0 & 0 & 0 & 0 \\
\hline
\end{tabular}\par
\begin{tabular}{|c|c c c|}
\hline
degree $p$ & 5 & 6 & 7  \\
\hline
dim $\FG^{(0)}(5,3)_p$ & 10 782 996 & 15 986 293 &	17 655 003  \\
rank $\delta^p_{uc}$ & 10 518 320 & 15 814 345 & 17 579 162 \\
rank $\delta^p_{c}$ & 10 231 610 & 15 549 669 &	17 407 214 \\
\hline
dim $H_p(\Gamma_{5, 3})$ & 0 & 0 & $6$ \\
\hline
\end{tabular}\par
\begin{tabular}{|c|c c c |}
\hline
degree $p$ & 8 & 9 & 10   \\
\hline
dim $\FG^{(0)}(5,3)_p$ & 14 160 990 & 7 643 546 & 2 187 207 \\
rank $\delta^p_{uc}$ & 14 140 410 & 7 640 782 & 2 187 201 \\
rank $\delta^p_{c}$ & 14 064 575 & 7 620 205 & 2 184 441 \\
\hline
dim $H_p(\Gamma_{5, 3})$ & 3 & 4 & 6 \\
\hline
\end{tabular}
\caption{\label{fig:gamma53figure} Dimension of excess 0 parts, ranks of $\delta_{uc}$, ranks of $\delta_c$
and the resulting homology of $\Gamma_{5, 3}$}
\end{center}
\end{table}

\begin{table}[H]
\small
\begin{center}\begin{tabular}{|c|c c c c|}
\hline
degree $p$ & 0 & 1 & 2 & 3  \\
\hline
dim $\FG^{(0)}(4,4)_p$ & 5 700 & 68 445 & 381 282 & 1 299 714 \\
rank $\delta^p_{uc}$ & 0 & 37 132 &	297 925 & 1 164 181 \\
rank $\delta^p_{c}$ & 0 & 31 433 & 266 612 & 1 080 824 \\
\hline
dim $H_p(\Gamma_{4, 4})$ & 1 & 0 & 0 & 0 \\
\hline
\end{tabular}\par
\begin{tabular}{|c|c c c|}
\hline
degree $p$ & 4 & 5 & 6  \\
\hline
dim $\FG^{(0)}(4,4)_p$ & 3 008 241 & 4 950 360 & 5 872 416 \\
rank $\delta^p_{uc}$ & 2 863 966 & 4 847 565 & 5 824 366 \\
rank $\delta^p_{c}$ & 2 728 433 & 4 703 291 & 5 721 571 \\
\hline
dim $H_p(\Gamma_{4, 4})$ & 1 & 0 & $\leq 10$ \\
\hline
\end{tabular}\par
\begin{tabular}{|c|c c c|}
\hline
degree $p$ & 7 &  8 & 9    \\
\hline
dim $\FG^{(0)}(4,4)_p$ & 4 939 206 & 2 751 159 & 807 993 \\
rank $\delta^p_{uc}$ & 4 925 888 & 2 749 409 & 807 984 \\
rank $\delta^p_{c}$ & 4 877 848 & 2 736 092 & 806 236 \\
\hline
dim $H_p(\Gamma_{4, 4})$ & $\leq 1$ & 2 &	9  \\
\hline
\end{tabular}
\caption{\label{fig:gamma44figure} Dimension of excess 0 parts, ranks of $\delta_{uc}$, ranks of $\delta_c$
and the resulting homology of $\Gamma_{4, 4}$}
\end{center}
\end{table}


\begin{table}[H]
\small
\begin{center}\begin{tabular}{|c|c c c c c|}
\hline
degree $p$ & 0 & 1 & 2 & 3 & 4  \\
\hline
dim $\FG^{(0)}(4,5)_p$ & 68 445 & 904 770 & 5 563 365 & 21 049 125 & 54 578 895 \\
rank $\delta^p_{uc}$ & 0 & 468 044 & 4 248 301 & 18 634 464 & 51 630 003 \\
rank $\delta^p_{c}$ & 0 & 399 600 & 3 811 575 & 17 319 400 & 49 215 342 \\
\hline
dim $H_p(\Gamma_{4, 5})$ & 1 & 0 & 0 & 0 & 1 \\
\hline
\end{tabular}\par
\begin{tabular}{|c|c c c|}
\hline
degree $p$ & 5 & 6 & 7  \\
\hline
dim $\FG^{(0)}(4,5)_p$ & 102 103 365 & 140 977 905 & 143 839 620 \\
rank $\delta^p_{uc}$ & 99 632 486 & 139 562 103 & 143 305 502 \\
rank $\delta^p_{c}$ & 96 683 595 & 137 091 224 & 141 889 720 \\
\hline
dim $H_p(\Gamma_{4, 5})$ & 0 & $\leq 20$ & $\leq 5$ \\
\hline
\end{tabular}\par
\begin{tabular}{|c|c c c |}
\hline
degree $p$ & 8 & 9 & 10   \\
\hline
dim $\FG^{(0)}(4,5)_p$ & 105 830 745 & 52 154 280 & 13 640 925 \\
rank $\delta^p_{uc}$ & 105 709 035 & 52 141 322 & 13 640 895 \\
rank $\delta^p_{c}$ & 105 174 922 & 52 019 623 & 13 627 998 \\
\hline
dim $H_p(\Gamma_{4, 5})$ & $\leq 11$ & $\leq 61$ & 30 \\
\hline
\end{tabular}
\caption{\label{fig:gamma45figure} Dimension of excess 0 parts, ranks of $\delta_{uc}$, ranks of $\delta_c$
and the resulting homology of $\Gamma_{4, 5}$}
\end{center}
\end{table}

\begin{table}[H]
\small
\begin{center}\begin{tabular}{|c|c c c c|}
\hline
degree $p$ & 0 & 1 & 2 & 3  \\
\hline
dim $\FG^{(0)}(3,6)_p$ & 42 885 & 496 440 & 2 634 570 & 8 460 600 \\
rank $\delta^p_{uc}$ & 0 & 261 539 & 2 046 301 & 7 596 279 \\
rank $\delta^p_{c}$ & 0 & 218 655 & 1 811 400 & 7 008 010 \\
\hline
dim $H_p(\Gamma_{3, 6})$ & 1 & 0 & 0 & 0 \\
\hline
\end{tabular}\par
\begin{tabular}{|c|c c c|}
\hline
degree $p$ & 4 & 5 & 6 \\
\hline
dim $\FG^{(0)}(3,6)_p$ & 18 254 745 & 27 723 420 & 30 066 990 \\
rank $\delta^p_{uc}$ & 17 449 591 & 27 241 257 & 29 887 957 \\
rank $\delta^p_{c}$ & 16 585 270 & 26 436 118 & 29 405 794 \\
\hline
dim $H_p(\Gamma_{3, 6})$ & 15 & 0 & 126 \\
\hline
\end{tabular}\par
\begin{tabular}{|c|c c c|}
\hline
degree $p$ & 7 & 8 & 9    \\
\hline
dim $\FG^{(0)}(3,6)_p$ & 22 941 540 & 11 556 990 & 3 100 500 \\
rank $\delta^p_{uc}$ & 22 903 842 & 11 553 486 & 3 100 491 \\
rank $\delta^p_{c}$ & 22 724 935 & 11 515 788 & 3 097 056 \\
\hline
dim $H_p(\Gamma_{3, 6})$ & 0 & 69 &	9  \\
\hline
\end{tabular}
\caption{\label{fig:gamma36figure} Dimension of excess 0 parts, ranks of $\delta_{uc}$, ranks of $\delta_c$
and the resulting homology of $\Gamma_{3, 6}$.}
\end{center}
\end{table}

\begin{table}[H]
\small
\begin{center}
\begin{tabular}{|c|c c c c|}
\hline
degree $p$ & 6 & 7 & 8 & 9  \\
\hline
dim $\FGO^{(0)}(8,0)_p$ &
18 761 597 & 40 392 604 & 69 210 756 & 94 334 907 \\
rank $\delta^p_{uc}$ &
17 626 054 &	39 154 077	& 68 221 364 &	93 762 763 \\
rank $\delta^p_{c}$ &
16 863 890 &	38 018 534	& 66 982 838 &	92 773 371 \\
\hline
dim $H_p(\Out(F_8), \mathrm{sgn})$ & 0 & 1 & 0 & 0 \\
\hline
\end{tabular}\par
\begin{tabular}{|c|c c c c|}
\hline
degree $p$ &  10 & 11 & 12 & 13  \\
\hline
dim $\FGO^{(0)}(8,0)_p$ &
100 865 552 &	81 530 997 &	45 624 853 &	13 694 427 \\
rank $\delta^p_{uc}$ &
100 634 968 &	81 472 059 &	45 617 476 &	13 694 420 \\
rank $\delta^p_{c}$ &
100 062 824 &	81 241 475 &	45 558 538 &	13 687 043 \\
\hline
dim $H_p(\Out(F_8), \mathrm{sgn})$ & 0 & 0 & 0 & 7 \\
\hline
\end{tabular}
\caption{Dimension of excess 0 parts, ranks of $\delta_{uc}$, ranks of $\delta_c$
and the resulting homology of $\Out(F_8)$ with sign representation coefficients.}
\label{fig:out8oddfigure}
\end{center}
\end{table}

\begin{table}[H]
\small
\begin{center}
\begin{tabular}{|c|c c c c|}
\hline
degree $p$ & 5 & 6 & 7 & 8  \\
\hline
dim $\FGO^{(0)}(7,1)_p$ &
8 307 838 &	19 655 396 &	36 160 049 &	51 895 594 \\
rank $\delta^p_{uc}$ &
7 480 666 &	18 640 482 &	35 272 592 &	51 349 922 \\
rank $\delta^p_{c}$ &
7 001 223 &	17 813 310 &	34 257 678 &	50 462 466 \\
\hline
dim $H_p(\Aut(F_7), \mathrm{sgn})$ & 0 & 0 & 1 & 0 \\
\hline
\end{tabular}\par
\begin{tabular}{|c|c c c c|}
\hline
degree $p$ & 9 & 10 & 11 & 12  \\
\hline
dim $\FGO^{(0)}(7,1)_p$ &
57 474 139 &	47 477 154 &	26 860 920 &	8 085 290 \\
rank $\delta^p_{uc}$ &
57 245 842 &	47 417 194 &	26 853 189 &	8 085 283 \\
rank $\delta^p_{c}$ &
56 700 170 &	47 188 897 &	26 793 229 &	8 077 555 \\
\hline
dim $H_p(\Aut(F_7), \mathrm{sgn})$ & 0 & 0 & 3 & 7 \\
\hline
\end{tabular}
\caption{Dimension of excess 0 parts, ranks of $\delta_{uc}$, ranks of $\delta_c$
and the resulting homology of $\Aut(F_7)$ with sign representation coefficients.}
\label{fig:aut7oddfigure}
\end{center}
\end{table}

\begin{table}[H]
\small
\begin{center}\begin{tabular}{|c|c c c c|}
\hline
degree $p$ & 0 & 1 & 2 & 3  \\
\hline
dim $\FGO^{(0)}(6,2)_p$ & 1 082 & 19 326 & 163 770 & 860 985  \\
rank $\delta^p_{uc}$ & 0 & 5 943 & 88 994 & 618 899  \\
rank $\delta^p_{c}$ & 0 & 4 861 & 75 611 & 544 123 \\
\hline
dim $H_p(\Gamma_{6, 2}, \mathrm{sgn})$ & 0 & 0 & 0 & 0 \\
\hline
\end{tabular}\par
\begin{tabular}{|c|c c c c|}
\hline
degree $p$ & 4 & 5 & 6 & 7  \\
\hline
dim $\FGO^{(0)}(6,2)_p$ & 3 129 647 & 8 321 229 & 16 666 447 & 25 421 008  \\
rank $\delta^p_{uc}$ & 2 627 185 & 7 617 608 & 15 988 017 & 24 973 080 \\
rank $\delta^p_{c}$ & 2 385 099 & 7 115 146 & 15 284 397 & 24 294 651 \\
\hline
dim $H_p(\Gamma_{6, 2}, \mathrm{sgn})$ & 0 & 1 & $\leq 1$ & $\leq 1$ \\
\hline
\end{tabular}\par
\begin{tabular}{|c|c c c c|}
\hline
degree $p$ &  8 & 9 & 10 & 11  \\
\hline
dim $\FGO^{(0)}(6,2)_p$ & 29 339 008 & 24 847 084 & 14 229 982 & 4 301 842 \\
rank $\delta^p_{uc}$ & 	29 143 171 & 24 794 732 & 14 223 204 & 4 301 828 \\
rank $\delta^p_{c}$ & 28 695 244 & 24 598 895 & 14 170 852 & 4 295 058 \\
\hline
dim $H_p(\Gamma_{6, 2}, \mathrm{sgn})$ & 0 & 0 & 8 & 14 \\
\hline
\end{tabular}
\caption{\label{fig:gamma62oddfigure} Dimension of excess 0 parts, ranks of $\delta_{uc}$, ranks of $\delta_c$
and the resulting homology of $\Gamma_{6, 2}$ with sign representation coefficients.}
\end{center}
\end{table}

\begin{table}[H]
\small
\begin{center}\begin{tabular}{|c|c c c c c|}
\hline
degree $p$ & 0 & 1 & 2 & 3 & 4 \\
\hline
dim $\FGO^{(0)}(5,3)_p$ & 1 781 & 28 230 & 208 425 & 943 980  & 2 920 191 \\
rank $\delta^p_{uc}$ & 0 & 9 014 & 118 461 & 705 299 & 2 525 050 \\
rank $\delta^p_{c}$ & 0 & 7 233 & 99 245 & 615 335 & 2 286 369 \\
\hline
dim $H_p(\Gamma_{5, 3}, \mathrm{sgn})$ & 0 & 0 & 0 & 0 & 0 \\
\hline
\end{tabular}\par
\begin{tabular}{|c|c c c|}
\hline
degree $p$ & 5 & 6 & 7  \\
\hline
dim $\FGO^{(0)}(5,3)_p$ & 6 502 332 & 10 670 694 & 12 923 025 \\
rank $\delta^p_{uc}$ & 6 077 021 & 10 369 592 & 12 785 642 \\
rank $\delta^p_{c}$ & 5 681 880 & 9 944 290 & 12 484 540 \\
\hline
dim $H_p(\Gamma_{5, 3}, \mathrm{sgn})$ & 9 & 0 & $\leq 1$ \\
\hline
\end{tabular}\par
\begin{tabular}{|c|c c c |}
\hline
degree $p$ &  8 & 9 & 10   \\
\hline
dim $\FGO^{(0)}(5,3)_p$ & 11 261 145 & 6 540 355 & 1 990 926 \\
rank $\delta^p_{uc}$ & 11 223 824 & 6 535 571 & 1 990 919 \\
rank $\delta^p_{c}$ & 11 086 442 & 6 498 253 & 1 986 144 \\
\hline
dim $H_p(\Gamma_{5, 3}, \mathrm{sgn})$ & $\leq 3$ & $\leq 9$ & 7 \\
\hline
\end{tabular}
\caption{\label{fig:gamma53oddfigure} Dimension of excess 0 parts, ranks of $\delta_{uc}$, ranks of $\delta_c$
and the resulting homology of $\Gamma_{5, 3}$ with sign representation coefficients.}
\end{center}
\end{table}

\begin{table}[H]
\small
\begin{center}\begin{tabular}{|c|c c c c|}
\hline
degree $p$ & 0 & 1 & 2 & 3  \\
\hline
dim $\FGO^{(0)}(4,4)_p$ & 2 403 & 32 970 & 207 273 & 788 019 \\
rank $\delta^p_{uc}$ & 0 & 11 363 & 126 154 & 621 566 \\
rank $\delta^p_{c}$ & 0 & 8 960 & 104 547 & 540 447 \\
\hline
dim $H_p(\Gamma_{4, 4}, \mathrm{sgn})$ & 0 & 0 & 0 & 1 \\
\hline
\end{tabular}\par
\begin{tabular}{|c|c c c|}
\hline
degree $p$ & 4 & 5 & 6  \\
\hline
dim $\FGO^{(0)}(4,4)_p$ & 2 011 959 & 3 614 718 & 4 634 844 \\
rank $\delta^p_{uc}$ & 1 807 265 & 3 458 962 & 4 561 849 \\
rank $\delta^p_{c}$ & 1 640 813 & 3 254 283 & 4 406 099 \\
\hline
dim $H_p(\Gamma_{4, 4}, \mathrm{sgn})$ & $\leq 15$ & $\leq 6$ & 0 \\
\hline
\end{tabular}\par
\begin{tabular}{|c|c c c|}
\hline
degree $p$ & 7 & 8 & 9 \\
\hline
dim $\FGO^{(0)}(4,4)_p$ & 4 171 686 & 2 461 443 & 757 449 \\
rank $\delta^p_{uc}$ & 4 151 789 & 2 459 001 & 757 430 \\
rank $\delta^p_{c}$ & 4 078 794 & 2 439 125 & 754 993 \\
\hline
dim $H_p(\Gamma_{4, 4}, \mathrm{sgn})$ & $\leq 21$ & $\leq 5$ & 19  \\
\hline
\end{tabular}
\caption{\label{fig:gamma44oddfigure} Dimension of excess 0 parts, ranks of $\delta_{uc}$, ranks of $\delta_c$
and the resulting homology of $\Gamma_{4, 4}$ with sign representation coefficients.}
\end{center}
\end{table}


\begin{table}[H]
\small
\begin{center}\begin{tabular}{|c|c c c c c|}
\hline
degree $p$ & 0 & 1 & 2 & 3 & 4 \\
\hline
dim $\FGO^{(0)}(4,5)_p$ & 32 970 & 483 450 & 3 269 925 & 13 498 665 & 37 888 560 \\
rank $\delta^p_{uc}$ & 0 & 173 940 & 2 032 620 & 10 731 315 & 34 058 243 \\
rank $\delta^p_{c}$ & 0 & 140 970 & 1 723 110 & 9 494 010 & 31 290 894 \\
\hline
dim $H_p(\Gamma_{4, 5}, \mathrm{sgn})$ & 0 & 0 & 0 & 1 & $\leq 35$ \\
\hline
\end{tabular}\par
\begin{tabular}{|c|c c c|}
\hline
degree $p$ & 5 & 6 & 7 \\
\hline
dim $\FGO^{(0)}(4,5)_p$ & 76 136 445 & 112 062 075 & 120 963 300 \\
rank $\delta^p_{uc}$ & 72 709 292 & 110 058 774 & 120 216 513 \\
rank $\delta^p_{c}$ & 68 879 010 & 106 631 631 & 118 213 212 \\
\hline
dim $H_p(\Gamma_{4, 5}, \mathrm{sgn})$ & $\leq 10$ & 0 & $\leq 45$ \\
\hline
\end{tabular}\par
\begin{tabular}{|c|c c c |}
\hline
degree $p$ & 8 & 9 & 10   \\
\hline
dim $\FGO^{(0)}(4,5)_p$ & 93 452 985 & 47 999 820 & 12 984 285 \\
rank $\delta^p_{uc}$ & 93 289 726 & 47 983 271 & 12 984 275 \\
rank $\delta^p_{c}$ & 92 542 984 & 47 820 037 & 12 967 821 \\
\hline
dim $H_p(\Gamma_{4, 5}, \mathrm{sgn})$ & $\leq 25$ & $\leq 95$ & 10 \\
\hline
\end{tabular}
\caption{\label{fig:gamma45oddfigure} Dimension of excess 0 parts, ranks of $\delta_{uc}$, ranks of $\delta_c$
and the resulting homology of $\Gamma_{4, 5}$ with sign representation coefficients.}
\end{center}
\end{table}

\begin{table}[H]
\small
\begin{center}\begin{tabular}{|c|c c c c|}
\hline
degree $p$ & 0 & 1 & 2 & 3  \\
\hline
dim $\FGO^{(0)}(3,6)_p$ & 27 765 & 345 240 & 1 955 430 & 6 658 440 \\
rank $\delta^p_{uc}$ & 0 & 140 400 & 1 335 405 & 5 647 585 \\
rank $\delta^p_{c}$ & 0 & 112 635 & 1 130 565 & 5 027 560 \\
\hline
dim $H_p(\Gamma_{3, 6}, \mathrm{sgn})$ & 0 & 0 & 0 & 41 \\
\hline
\end{tabular}\par
\begin{tabular}{|c|c c c|}
\hline
degree $p$ & 4 & 5 & 6  \\
\hline
dim $\FGO^{(0)}(3,6)_p$ & 15 132 645 & 24 048 180 & 27 113 190 \\
rank $\delta^p_{uc}$ & 14 152 791 & 23 464 187 & 26 901 970 \\
rank $\delta^p_{c}$ & 13 141 977 & 22 484 333 & 26 317 997 \\
\hline
dim $H_p(\Gamma_{3, 6}, \mathrm{sgn})$ & 0 & 20 & 0\\
\hline
\end{tabular}\par
\begin{tabular}{|c|c c c|}
\hline
degree $p$ & 7 &  8 & 9    \\
\hline
dim $\FGO^{(0)}(3,6)_p$ & 21 369 060 & 11 050 470 & 3 024 900 \\
rank $\delta^p_{uc}$ & 21 326 305 & 11 046 794 & 3 024 854 \\
rank $\delta^p_{c}$ & 21 115 085 & 11 004 165 & 3 021 183 \\
\hline
dim $H_p(\Gamma_{3, 6}, \mathrm{sgn})$ & $\leq 126$ & $\leq 5$ & 46\\
\hline
\end{tabular}
\caption{\label{fig:gamma36oddfigure} Dimension of excess 0 parts, ranks of $\delta_{uc}$, ranks of $\delta_c$
and the resulting homology of $\Gamma_{3, 6}$ with sign representation coefficients.}
\end{center}
\end{table}

\end{document}